\newcommand{\R}{\mathbb{R}}
\newcommand{\rd}{\mathrm{d}}
\newcommand{\matbra}[1]{\left[ \hspace{-3pt} #1 \hspace{-3pt} \right]}
\newcommand{\dthe}[2]{\theta^{\left( #1 \right)}_{ #2 }}
\newcommand{\xd}[2]{x^{\left( #1 \right)}_{ #2 }}
\newcommand{\ud}[2]{u^{\left( #1 \right)}_{ #2 }}
\newcommand{\bud}[2]{\overline{u}^{\left( #1 \right)}_{ #2 }}
\newcommand{\fd}{\delta^+}
\newcommand{\bd}{\delta^-}
\newcommand{\cd}{\delta^{\langle 1 \rangle}}
\newcommand{\cdd}{\delta^{\langle 2 \rangle}}
\newcommand{\fa}{\mu^+}
\newcommand{\ba}{\mu^-}
\newcommand{\caa}{\mu^{\langle 2 \rangle}}
\newcommand{\acd}{\delta^{-1}_{\mathrm{FD}}}
\newcommand{\tacd}{\tilde{\delta}^{-1}_{\mathrm{FD}}}
\newcommand{\relmiddle}[1]{\mathrel{}\middle#1\mathrel{}}
\newcommand{\TheTitle}{A self-adaptive moving mesh method for the short pulse equation via its hodograph link to the sine-Gordon equation} 
\newcommand{\TheAuthors}{S. Sato, K. Oguma, T. Matsuo, B.-F. Feng}
\headers{\TheTitle}{\TheAuthors}
\title{{\TheTitle}\thanks{This work was partly supported by JSPS KAKENHI Grand Numbers 26390126, 25287030, and 15H03635, and also by CREST, JST. The first author is supported by the Research Fellowship of the Japan Society for the Promotion of Science for Young Scientists.}}
\author{
Shun Sato\thanks{Graduate School of Information Science and Technology, The University of Tokyo, Bunkyo-ku, Tokyo, Japan
(\email{shun\_sato@mist.i.u-tokyo.ac.jp}, \url{http://www.sr3.t.u-tokyo.ac.jp/sato/}).}
\and
Kazuhito Oguma
\and
Takayasu Matsuo\thanks{Graduate School of Information Science and Technology, The University of Tokyo, Bunkyo-ku, Tokyo, Japan
(\email{matsuo@mist.i.u-tokyo.ac.jp}).}
\and
BaoFeng Feng\thanks{School of Mathematical and Statistical Sciences, The University of Texas Rio Grande Valley, Edinburg, TX, U.S.A.
(\email{baofeng.feng@utrgv.edu}).}
}
\begin{document}
\maketitle


\begin{abstract}
The short pulse equation was introduced by Sch\"afer--Wayne (2004) 
for modeling the propagation of ultrashort optical pulses.
While it can describe a wide range of solutions, 
its ultrashort pulse solutions with a few cycles,
which the conventional nonlinear Schr\"odinger equation does not possess,
have drawn much attention.
In such a region, existing numerical methods turn out to require very
fine numerical mesh, and accordingly are computationally expensive.
In this paper, we establish a new efficient numerical method
by combining the idea of the hodograph transformation and 
the structure-preserving numerical methods.
The resulting scheme is a self-adaptive moving mesh scheme
that can successfully capture not only the ultrashort pulses
but also exotic solutions such as loop soliton solutions.
\end{abstract}

\begin{keywords}
Short pulse equation, 
Sine-Gordon equation, 
Hodograph transformation, 
Self-adaptive moving mesh, 
Discrete conservation law
\end{keywords}

\begin{AMS}
65M06, 78A60
\end{AMS}

\pagestyle{myheadings}
\thispagestyle{plain}
\markboth{S. SATO, K. OGUMA, T. MATSUO, B.-F. FENG}{A SELF-ADAPTIVE MOVING MESH METHOD FOR THE SHORT PULSE EQUATION}

\section{Introduction}
\label{intro}

In this paper, we consider the numerical integration of the short pulse (SP) equation
\begin{equation}
u_{tx} = u + \frac{1}{6} \left( u^3 \right)_{xx}, 
\label{eq_spe}
\end{equation}
which is a model equation of ultrashort optical pulses in nonlinear media~\cite{SW2004,CJSW2005}. 
Here, $ t \in \R_{+} := [0, + \infty)$ and $ x \in \R $ denote temporal and spatial independent variables, respectively. 
The dependent variable $ u = u (t,x) $ represents the magnitude of the electric field, 
and subscripts $ t $ and $ x $ denote partial differentiations. 
Although this equation is usually considered 
on the whole real line $\R$ with the vanishing boundary condition
$|u(t,x)|\to 0$ as $|x|\to \infty$,
in this paper, for the consistency with numerical treatment,
we consider the problem on the circle, i.e., under the periodic boundary condition $ u(t,x+L) = u(t,x) $ for any $ t \in \R_+ $ and $ x \in \R $, 
where $L \in \R_+$ is a constant representing the length of the period.

We here summarize the theoretical results obtained so far, most of which were
established on $\R$.
Brunelli~\cite{B2005,B2006} found its hierarchy and bi-Hamiltonian structure. 
Sakovich--Sakovich~\cite{SS2005} proved the integrability of the SP equation by showing 
that it is associated with the sine-Gordon (SG) equation in light-cone coordinates
\begin{equation}
\theta_{ \tau s} = \sin \theta
\label{eq_sge}
\end{equation}
via a hodograph transformation. 
Here, $ \tau $ and $s$ denote new temporal and spatial independent variables.
It was then shown that breather solition solutions of the SP equation 
can be generated from those of the SG equation~\cite{SS2006}. 
Another interesting outcome of this SG formulation is that
wide range of multi-valued, exotic solutions such as loop solutions
can be regarded as the solutions of the SP equation; see, for example,
\cite{M2007,PA2009,LL2009,FCZML2010,GTGCC2015}. 
From the theoretical aspects as the partial differential equation (PDE),
the global well-posedness of the SP equation (and the SG equation) was discussed by Pelinovsky--Sakovich~\cite{PS2010}, 
whereas the local well-posedness was already proved by Sch\"afer--Wayne~\cite{SW2004}. 
A similar result for entropy solutions was recently shown by Coclite--di Ruvo~\cite{CR2015}.
On the other hand, Liu--Pelinovsky--Sakovich~\cite{LPS2009} found the conditions of the wave breaking in the SP equation (see also Sakovich~\cite{SakovichMT}).
Some generalizations of the SP equation have been considered as well, including
vector SP equation~\cite{PKB2008},
regularized SP equation~\cite{CMJ2009},
integrable coupled SP equation~\cite{F2012}, 
higher-order SP equation~\cite{KCS2013}, 
stochastic SP equation~\cite{KS2014}, 
and
(coupled) complex SP equation~\cite{F2015}.
There are also a few results for the circle (periodic) case.
Parkes~\cite{P2008} and Matsuno~\cite{M2008}, 
found various periodic solutions (see, also \cite{SWWKSF2010}). 
Liu--Pelinovsky--Sakovich~\cite{LPS2009} also dealt with local well-posedness and wave breaking scenario on the circle. 

When it comes to its numerical treatment, surprisingly,
it seems much less have been understood.
Sakovich~\cite{SakovichMT} employed the Fourier-spectral and Runge--Kutta methods 
to verify the well-posedness and wave-breaking of the SP equation. 
Pietrzyk--Kanatt\v{s}ikov~\cite{PK2015} applied a multi-symplectic integrator for the SP equation, 
and concluded that the multi-symplectic integrator is more efficient than Fourier-spectral methods. 

One reason for the lack of numerical studies
perhaps lies in the fact
that the SP equation intrinsically aims at (nearly) singular solutions.
Let us describe the situation taking the typical pulse solution
on $\R$ devised in~\cite{SS2006}:
\begin{equation}
\begin{cases}
{\displaystyle u ( \tau ,s ) = 4 \xi \zeta \frac{ \xi \sin \psi \sinh \phi  + \zeta \cos \psi \cosh \phi }{ \xi^2 \sin^2 \psi + \zeta^2 \cosh^2 \phi },}\\[10pt]
{\displaystyle x( \tau ,s) = s + 2 \xi \zeta \frac{ \xi \sin 2 \psi - \zeta \sinh 2 \phi }{ \xi^2 \sin^2 \psi + \zeta^2 \cosh^2 \phi }.}
\end{cases}
\label{eq_spe_breather}
\end{equation}
The parameter $ \xi \in (0,1)$ determines the shape of the wave, and other parameters are defined as 
\[ \zeta := \sqrt{1 - \xi^2} , \quad \phi  := \xi ( s + \tau ) , \quad \psi := \zeta (s - \tau). \]
This  solution with $ \xi < \xi_{\mathrm{cr}} = \sin \left( \pi / 8 \right) \approx 0.383 $ 
is nonsingular~(see, \cref{fig_shortpulse} for wave profile with $ \xi = 0.3 $), whereas the singularities $ u_x \to \pm \infty $ appear when $ \xi \ge \xi_{\mathrm{cr}}$; beyond the critical value,
the solution becomes multi-valued.
According to our preliminary numerical tests,
existing numerical methods can handle nonsingular solutions 
far enough away from the threshold $\xi_{\mathrm{cr}}$.
As $\xi$ approaches to the threshold, however,
those numerical methods tend to require very small mesh sizes,
and accordingly high computational cost.
This is a serious problem if we recall what the SP equation is for---it aims at capturing ultrashort optical pulses with only a few oscillations of the electric field. 
The rapid development of laser technologies enabled the actual generation 
of such ultrashort pulses.
In such a region, the standard nonlinear Schr\"odinger equation, which assumes slowly varying amplitude, does not give an accurate approximation, and that is exactly the situation the SP equation is designed for.
In fact, one may check the original paper~\cite[Figure~4]{SW2004} to find
that the main focus is on the extreme case where only three or four visible peaks are in a single pulse (see also \cite[Figures 2,3]{CJSW2005}).
In terms of the solution~\cref{eq_spe_breather},
the situation corresponds to nearly singular case
with $\xi \simeq \xi_{\mathrm{cr}}$; see \cref{fig_shortpulse} ($\xi=0.30$),
and \cref{fig_SPDVDM} ($0.38$).
Let us, for example, see a result by a special numerical integrator
that keeps an invariant of the SP equation 
(devised in \cref{subsec_sp_normal}).
Such a conservative method is generally believed to give qualitatively
better results than generic methods for numerically tough problems,
but still not sufficient for the SP equation.
We see in \cref{fig_SPDVDM} that the numerical solution rapidly develops
unphysical numerical oscillations.
The present authors observed that 
we need extremely fine meshes to eliminate such oscillations.

\begin{figure}[htbp]
\begin{minipage}{0.48\textwidth}
\pgfplotsset{width=5.5cm}
\centering
\begin{tikzpicture}
\begin{axis}[compat = newest,xlabel={$x$},ylabel={$u$},enlarge x limits=false,ymax=1.5,ymin=-1.5]
\addplot[black,smooth] table[x=x,y=y] {perbreatherCD0xy.dat};
\end{axis}
\end{tikzpicture}
\caption{The smooth pulse solution~\cref{eq_spe_breather} with the parameter $ \xi = 0.3 $. }
\label{fig_shortpulse}
\end{minipage}
\hspace{3pt}
\begin{minipage}{0.48\textwidth}
\centering
\pgfplotsset{width=5.5cm,compat=newest}
\begin{tikzpicture}
\begin{axis}[
		xlabel=$x$, 
		ylabel=$t$,ytick={0,5,10},
		zlabel=$u$, 
		view={60}{45},
		enlarge x limits=false]
\addplot3[black] table {stdDVDMpulse0.dat};
\addplot3[black] table {stdDVDMpulse1.dat};
\addplot3[black] table {stdDVDMpulse2.dat};
\addplot3[black] table {stdDVDMpulse3.dat};
\addplot3[black] table {stdDVDMpulse4.dat};
\addplot3[black] table {stdDVDMpulse5.dat};
\addplot3[black] table {stdDVDMpulse6.dat};
\addplot3[black] table {stdDVDMpulse7.dat};
\addplot3[black] table {stdDVDMpulse8.dat};
\addplot3[black] table {stdDVDMpulse9.dat};
\addplot3[black] table {stdDVDMpulse10.dat};
\end{axis}
\end{tikzpicture}
\caption{The numerical solution of the norm-preserving numerical method~\cref{eq_std_dvdm} under the initial condition~\cref{eq_spe_breather} with $ \xi = 0.38 $ $( \Delta x = L/K = 66.96/127 \approx 0.527 , \ \Delta t = 0.1)$.}
\label{fig_SPDVDM}
\end{minipage}
\end{figure}

A good approach to treat such a localized, nearly singular solutions
is to employ the
\emph{moving mesh methods} (see, e.g., \cite{HR2011,BHR2009}). 
In a moving mesh method~\cite{HR2011},
first the original PDE on $u(t,x)$ is transformed to a
parametrized form on, say, $u(\tau,s)$ and $x(\tau,s)$
($\tau, s$ are new independent variables that replace $t, x$),
and it is solved with a ``moving mesh PDE'' on $x(\tau,s)$
that describes the moving mesh strategy.
Although this approach is quite effective in many problems,
the application to the SP equation seems a tough task.
The difficulty lies on the term $ u_{tx} $, which implicitly casts
a constraint $ \int^L_0 u(t,x) \rd x = 0 $ (see, for example, Horikis~\cite{H2009}) on all solutions.
This can be easily seen by integrating the SP equation.
This is just a linear constraint, and can be satisfied by simple methods (for example, the Fourier-spectral method~\cite{SakovichMT}).
It becomes, however, highly nontrivial when we consider a moving mesh method---the parametrization destroys the simple constraint, and its treatment becomes a big mathematical challenge; see \cref{rem_tradeoff}.
As a preliminary test, we considered a discretization of the parametrized form of the SP equation~\cref{eq_spemg} by the Fourier-spectral method, and tried to solve it by `ode15s' of MATLAB 2013R.
The trial was not satisfactory, which showed that a careful treatment 
of the implicit constraint is indispensable.
In fact, to the best of the present authors' knowledge, 
there have been no moving mesh schemes for the PDEs of the form $ u_{tx} = f(t,u)$ in the literature, which 
should be attributed to the observation above.
Here, we do not deny the possibility in this research direction,
since such PDEs include various practical applications such as
the Ostrovsky equation~\cite{O1978} (and its generalizations), 
the nonlinear Klein--Gordon equation in light cone coordinates, among others,
and thus the new techniques in this direction will surely open a new door.
Still, in the present paper,
we like to leave it to future works, and turn to a different approach.

In this paper, we attempt to import some techniques from
the research field of \emph{integrable systems}. 
Feng--Inoguchi--Kajiwara--Maruno--Ohta~\cite{FIKMO2011} 
made use of the fact that the SP equation is related to the SG equation by the hodograph
transformation.
They started with a known integrable discretization of the SG equation,
and by translating it with a discrete hodograph transformation,
reached a new integrable discretization of the SP equation.
(Feng et al.~\cite{FIKMO2011} also dealt with the Wadati--Konno--Ichikawa elastic beam equation~\cite{WKI1979} and the complex Dym equation; 
see also~\cite{FMO2010_1,FMO2010_2,FMO2010_3,OMF2008,FMO2014,FCCMO2015}.)
This approach had an advantage that the sample points automatically distributed uniformly along the solution curve, and actually 
it beautifully reproduced various solition solutions, either single- or multi-valued.
Despite the success, however, there remained some issues to regard
the approach as a practical numerical method.
First, as the studies of integrable systems, the works above
mainly focused on the discrete reproduction of the solitons,
and it was not clear how general initial data could be treated.
Second, they considered problems only on $\R$, and the treatment of
boundaries was set outside their scope.
This means the numerical integration should be limited only up to
a short time before the pulse reaches a boundary of the domain,
which is unsatisfactory as a practical numerical scheme.

In this paper, we provide a new comprehensive numerical framework
for the SP equation, inheriting the idea of hodograph transformation
from the integrable systems approach.
To this end, we have to clarify what the transformation is,
in the language of numerical analysis.
We will actually point out that 
the hodograph transformation between the SP and the SG equations can be 
regarded as the combination of ``parametrization by arc-length $s$'' and ``argument transformation'' (see, \cref{sec_trans}).
In this sense, the hodograph transformation approach can be understood
as an alternative way of the standard moving mesh method.
Then we will show how the periodic boundary condition can be treated.
This involves several nontrivial tasks;
first, the description in the transformed SG equation itself is not straightforward (\cref{prop_pbc}).
Second, the SP and the SG equations are closely related, and 
the original periodic boundary condition in the SP equation corresponds to
an invariant preservation in the SG equation (\cref{lemL}).
Thus, for a numerical computation to make sense,
the SG equation should be solved with a special integrator that replicates the invariant.
In total, in \cref{sec_trans,sec_scheme}, we will show how 
deliberate 
consideration is necessary to build up a consistent framework.

The resulting scheme has the following advantages:
\begin{enumerate}
\item[(i)] it can deal with arbitrary initial conditions,
\item[(ii)] it is a self-adaptive moving mesh method, i.e.,
the grids distribute appropriately without using an extra mesh PDE,
\item[(iii)] the discrete periodic boundary condition of the SP equation is rigorously guaranteed by preserving an invariant in the SG equation, 
\item[(iv)] the computation of the SG equation is quite stable
 thanks to a recently developed numerical integrator in \cite{FSM2016+}, and
\item[(v)] the scheme can deal with multi-valued solutions as well; in particular, it beautifully works also for single-valued solutions near singularity.
\end{enumerate}

The rest of the present paper is organized as follows. 
In \cref{sec_trans}, 
we review and discuss the relation between the SP and the SG equations.
Some facts are from the literature, but 
we will carry out our argument more carefully clarifying 
the treatment of the boundaries.
\Cref{sec_scheme} is devoted to the proposed scheme for the SP equation on the periodic domain. 
Then, we show the superiority of our proposed method by numerical experiments in \cref{sec_ne}. 
Finally, \cref{sec_conclusion} is devoted to some concluding remarks.


\section{Relating the SP equation to the SG equation: continuous case}
\label{sec_trans}

In this section, we summarize some facts about the SP and the SG equations.
We start by pointing out some properties of the SP equation in \cref{subsec_arc}.
In \cref{sec_equiv}, we discuss the transformation between the SP and the SG equations,
and state exactly in which sense they are equivalent.
Then \cref{sec_fixed} is devoted to the proof
of an important, non-trivial fact that
the physical and computational spaces can be fixed in our regime.
Until this point, we leave the boundary condition open,
so that the mathematical results so far are applicable to general
cases other than the periodic boundary condition.
In \cref{subsec_bound_cond}, we focus on the circle case,
and show how we can treat the periodic boundary condition.
Finally, in \cref{sec_implicit}, we show our framework
successfully recovers the implicit constraint of the SP equation,
which completes the whole picture.

\subsection{The SP equation: its conservation law and Hamiltonian structures}
\label{subsec_arc}

We start by reviewing a certain conservation law
that will play an essential role in the following discussion.
More precisely, the solution of the SP equation~\cref{eq_spe} 
preserves the ``total arc-length''~\cite{B2005}. 

Let us consider the SP equation on a fixed physical domain $[0,L]$ (we leave the boundary condition open for the moment).
For a smooth function $ u : [0,L] \to \R $, the total arc-length $ \mathcal{S} (u) $ is defined as 
\[ \mathcal{S} (u) := \int^L_0 \sqrt{ 1 + \left( u_x (x) \right)^2 } \rd x.  \]
For a function $ u : \R_+ \times [0,L] \to \R $ and $ t \in \R_+ $,  
the notation $ u(t) : [0,L] \to \R $ denotes a function satisfying $ \left( u (t) \right) (x) = u(t,x) $ for any $ x \in [0,L] $. 
As shown in the following proposition, 
the total arc length $ \mathcal{S} (u (t)) $ of a solution $ u $ of the SP equation~\cref{eq_spe} 
is invariant under certain boundary conditions.  

\begin{proposition}
Let $ u : \R_+ \times [0,L] \to \R $ be a smooth solution of the short pulse equation~\cref{eq_spe} with a boundary condition satisfying  
\begin{equation*}
\left[ \frac{ \left( u (t,x) \right)^2 \sqrt{ 1 + \left( u_x (t,x) \right)^2 }  }{2} \right]^L_0 = 0.
\label{eq_arclength_boundary}
\end{equation*}
Then, $ ( \rd / \rd t ) \mathcal{S} (u (t) ) = 0 $ holds for any $ t \in \R_+ $. 
\label{prop_arclength}
\end{proposition}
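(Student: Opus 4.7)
The plan is to differentiate $\mathcal{S}(u(t))$ under the integral sign, substitute the SP equation to eliminate the time derivative, and then recognize the remaining spatial integrand as an exact derivative in $x$. Concretely, a straightforward calculation gives
\[
\frac{\rd}{\rd t} \mathcal{S}(u(t)) = \int_0^L \frac{u_x u_{tx}}{\sqrt{1+u_x^2}} \rd x,
\]
and substituting $u_{tx} = u + \tfrac{1}{6}(u^3)_{xx}$ reduces the proof to showing that the resulting integrand is a total derivative.

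Next, I would expand $(u^3)_{xx} = 6 u u_x^2 + 3 u^2 u_{xx}$ and split the numerator into the two pieces $u_x \cdot u(1 + u_x^2)$ and $\tfrac{1}{2} u^2 u_x u_{xx}$. Dividing by $\sqrt{1+u_x^2}$, the first piece collapses to $u u_x \sqrt{1+u_x^2} = \tfrac{1}{2}(u^2)_x \sqrt{1+u_x^2}$, and the second piece becomes $\tfrac{1}{2} u^2 \cdot \frac{u_x u_{xx}}{\sqrt{1+u_x^2}} = \tfrac{1}{2} u^2 \bigl(\sqrt{1+u_x^2}\bigr)_x$. The Leibniz rule then identifies the whole integrand as $\frac{\rd}{\rd x}\bigl[\tfrac{1}{2} u^2 \sqrt{1+u_x^2}\bigr]$.

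Finally, applying the fundamental theorem of calculus yields
\[
\frac{\rd}{\rd t} \mathcal{S}(u(t)) = \left[\frac{u^2 \sqrt{1+u_x^2}}{2}\right]_0^L,
\]
which vanishes by the assumed boundary condition. There is really no hard step here: the only subtle point is spotting the correct grouping of terms that exhibits the integrand as an exact $x$-derivative. Once the factor $\tfrac{1}{2} u^2$ outside the derivative is anticipated (as suggested by the form of the boundary term in the statement), the rest is bookkeeping.
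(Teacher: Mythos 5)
Your proof is correct and takes essentially the same route as the paper: differentiate $\mathcal{S}(u(t))$ under the integral sign, substitute $u_{tx} = u + \tfrac{1}{6}(u^3)_{xx}$, and recognize the integrand as the exact derivative $\tfrac{\rd}{\rd x}\bigl[\tfrac{1}{2}u^2\sqrt{1+u_x^2}\bigr]$. The only difference is that the paper compresses this last identification into a single equality, whereas you spell out the grouping $(u^3)_{xx} = 6uu_x^2 + 3u^2u_{xx}$ and the Leibniz-rule verification explicitly, which the paper leaves to the reader.
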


\begin{proof}
This proposition can be shown in a straightforward way: 
\begin{align*}
\frac{\rd}{\rd t } \mathcal{S} ( u (t) ) 
&= \int^L_0 \frac{ \partial }{ \partial t } \sqrt{ 1 + u^2_x } \rd x 
= \int^L_0 \frac{u_x }{ \sqrt{ 1 + u^2_x } } u_{tx} \rd x \\
&= \int^L_0 \frac{u_x }{ \sqrt{ 1 + u^2_x (t,x) } } \left( u + \frac{1}{6} \left(  u^3 \right)_{xx} \right) \rd x 
= \left[ \frac{ u^2 \sqrt{ 1 + u^2_x }  }{2} \right]^L_0 =0.
\end{align*}
The last equality holds due to the assumption on the boundary condition. 
\end{proof}

We also recall the fact that the SP equation has 
the bi-Hamiltonian structure (Brunelli~\cite{B2006}):
\begin{align}
u_t &= \left( \partial^{-1}_x + u_x \partial_x^{-1} u_x \right) \frac{\delta \mathcal{I}}{\delta u}, &
\mathcal{I} (u) &= \frac{1}{2} \int^L_0 u^2  \rd x ,
\label{eq_spe_vfnorm}\\
u_t &= \partial_x \frac{\delta \mathcal{E}}{\delta u},&
\mathcal{E} (u) &= \int^L_0 \left( \frac{1}{24} u^4 - \frac{1}{2} \left( \partial^{-1}_x u \right)^2  \right) \rd x,
\label{eq_spe_vfenergy}
\end{align}
where the antiderivative $ \partial_x^{-1} $ is defined as 
\begin{equation*}
\partial_x^{-1} v (x) = \int^x_0 v (y ) \rd y - \frac{1}{L} \int^L_0 \int^z_0 v(y) \rd y \rd z , 
\label{eq_def_anti}
\end{equation*}
which is employed by Hunter~\cite{H1990} for the short wave equation. 
The Hamiltonian structure can be utilized to construct 
structure-preserving schemes directly applying to the SP equation
(see \cref{subsec_sp_normal}).

\subsection{Transformation from the SP equation to the SG equation}
\label{sec_equiv}

Now let us discuss how we transform the SP equation to the SG equation.
The transformation starts with the standard procedure
in the moving mesh methods (see, for example,~\cite{HR2011}).
We introduce new space and time variables
 $ s \in [0,S]$ and $ \tau \in \R_+ $, 
($ \tau $ actually coincides with $ t$, 
but it is introduced here for the ease of handling independent variables). 
For the moment, $S$ is assumed to be some constant, and later 
it will be redefined (see \cref{fig_sg_diagram} and its explanation).
The differential operators $ \partial / \partial x $ and $ \partial / \partial t $ can be replaced
 with $ \partial / \partial s  $ and $ \partial / \partial \tau $
using the identities
\begin{align*}
\frac{ \partial }{ \partial x } &= \frac{1}{x_s} \frac{ \partial }{ \partial s }, &
\frac{ \partial }{ \partial t } &= \frac{ \partial }{ \partial \tau } - \frac{ x_{\tau} }{ x_s } \frac{ \partial }{ \partial s }.
\label{eq_trans_mg}
\end{align*}
Then the SP equation~\cref{eq_spe} is transformed to
\[ \left( \frac{1}{x_s} \frac{ \partial }{ \partial s } \right) \left( \frac{ \partial }{ \partial \tau } - \frac{ x_{\tau} }{ x_s } \frac{ \partial }{ \partial s } \right) u = u + \frac{1}{6} \left(  \frac{1}{x_s} \frac{ \partial }{ \partial s } \right)^2 u^3, \]
which can be further simplified into
\begin{align}
x_s \left( u_{ \tau s} x_s - x_{\tau s} u_s \right) 
=  u x_s \left( x_s^2 + u_s^2 \right) + \left( x_{\tau} + \frac{u^2}{2} \right) \left( u_{ss} x_s - x_{ss} u_s \right). 
\label{eq_spemg}
\end{align}
We call the equation~\cref{eq_spemg} a ``parametrized form'' of the SP equation. 
We call the solution of~\cref{eq_spemg} as the solution curve of the SP equation by convention. 

Now we here start something different from the standard 
moving mesh approach.
We regard the variable $s$ as the ``arc-length'' of the
solution $u(t,x)$, and employ
the hodograph transformations below~\cite{SS2005}:
\begin{subequations}\label{eq_hodograph}
\begin{equation}
\matbra{ \begin{array}{c} x( \tau ,s) \\ u( \tau ,s) \end{array} } = \matbra{ \begin{array}{c} x( \tau ,0) \\ u( \tau ,0) \end{array} } 
+ \int^s_0 \matbra{ \begin{array}{c} \cos \theta ( \tau , \sigma ) \\ \sin \theta ( \tau ,\sigma) \end{array} } \rd \sigma,
\label{ht}
\end{equation}
\begin{equation}
\matbra{ \begin{array}{c} x_s (\tau ,s) \\ u_s ( \tau ,s) \end{array} } = \matbra{ \begin{array}{c} \cos \theta ( \tau , s ) \\ \sin \theta (\tau ,s) \end{array} }. 
\label{iht}
\end{equation}
\end{subequations}
The hodograph transformation connects 
the expression $ (x( \tau ,s) , u(\tau,s) )$ in the two dimensional Cartesian coordinates 
with another expression in terms of the argument $ \theta (\tau ,s) $. 
Computing $ \theta (\tau,s) $ from $ (x(\tau,s) ,u(\tau,s) )$ can be done by~\cref{iht}. 
However, we need the ``base point'' $ (x(\tau,0), u(\tau,0)) $ when 
we compute $ ( x(\tau,s) ,u(\tau,s) )$ back from $ \theta(\tau,s) $. 
Notice that there remains a degree of freedom in choosing such a
base point.

Let us relate the SP equation to the SG equation using the new variables.
For any solution $ \left( x(\tau,s) , u (\tau,s) \right) $ of the parametrized form~\cref{eq_spemg} of the SP equation, 
by substituting~\cref{eq_hodograph}, we obtain 
\begin{equation}
\left( \theta_{\tau} - u \right) \cos \theta = \left( x_{\tau} + \frac{u^2}{2} \right) \theta_s . 
\label{spe2sge1}
\end{equation}
Note that the relation above is equivalent to the equation~\cref{eq_spemg}. 
By differentiating ~\cref{spe2sge1} with respect to $ s $, we have
\begin{equation}
\left( \theta_{\tau s} - \sin \theta \right) \cos \theta = \left( x_{\tau} + \frac{u^2}{2} \right) \theta_{ss} . 
\label{spe2sge2}
\end{equation}
From \cref{spe2sge1} and \cref{spe2sge2}, we see
\[ \left\{ \left( \theta_{\tau} - u \right) \theta_{ss} - \left( \theta_{\tau s} - \sin \theta \right) \theta_s \right\} \cos \theta = 0. \]
If $ \cos \theta \neq 0 $ and $ \theta_s \neq 0 $ hold almost everywhere, we obtain 
\[ \left( \frac{ \theta_{\tau} - u }{ \theta_s } \right)_s = 
\frac{\left( \theta_{\tau} - u \right) \theta_{ss} - \left( \theta_{ \tau s} - \sin \theta \right) \theta_s}{ \theta_{s}^2 } = 0. \]
If, furthermore, 
$\theta_{\tau} (\tau,0) = u(\tau,0)$
and $\theta_s (\tau ,0) \neq 0$ holds, we obtain
\[ \frac{ \theta_{\tau} (\tau,s) - u (\tau,s) }{ \theta_s (\tau,s) } = \frac{ \theta_{\tau} (\tau,0) - u (\tau,0) }{ \theta_s (\tau,0) } = 0,  \]
which implies $ \theta_{\tau} - u = 0 $.
It in turn implies
\begin{equation}
x_{\tau} + \frac{u^2}{2} = 0 \label{eq_condxy}
\end{equation}
from~\cref{spe2sge1}.
Finally, by~\cref{spe2sge2}, we conclude $ \theta_{\tau s} - \sin \theta = 0$,
which is the desired sine-Gordon equation~\cref{eq_sge}.
Below, we consider the equivalence of the SP and the SG equations more precisely.

Before that, at this point let us show the overall picture of our strategy
in \cref{fig_sg_diagram}.
By ``physical domain'' we mean the SP equation on $u(t,x)$ or
$\left( x(t,s), u(t,s) \right)$.
As seen above, the SP equation is transformed to the SG equation on $\theta(\tau,s)$,
which is called ``computational domain.''
Our main strategy is to solve the SG equation (instead of the SP equation),
and transform the solution back to the physical space.
Note that we denote the length of the physical and computational domains
by $L(\tau)$ and $S(\tau)$, 
stating the possibility of their dependence on time $\tau$.
The reason of this is because since we regard the variable
$s$ as the arc-length of the solution, 
$S(\tau)$ can vary in time in general (i.e., in general PDEs)
if we fix $L$;
similarly, $L(\tau)$ is allowed to depend on time if we fix $S$.
It should be mathematically proved that they remain constants
in the case of the SP and the SG equations.

\begin{remark} \label{rem_tradeoff}
Let us briefly comment on the difference between the standard
moving mesh method and the approach here.
In the standard method, after obtaining the parametrized PDE,
one would furnish it
with what we call ``a moving mesh PDE (MMPDE),'' which controls the grids.
An advantage of this approach is that 
we can simply fix $L$ and $S$ as constants.
Instead, there arises a difficulty that
it is hard to find a counterpart of the implicit constraint 
$\int_0^L u(t,x){\rm d}x=0$ in \cref{eq_spemg}.
In contrast, in the approach here, 
the constraint can be replicated as we will see later,
but a careful discussion is necessary to guarantee
that $L(\tau)$ and $S(\tau)$ stay fixed.
In this sense, there is a trade-off between the two approaches.

Also note that, in view of \cref{prop_spe2sge} below, 
our method can be regarded as a special case of moving mesh methods 
whose moving mesh PDE is $ x_{\tau} = - u^2/2 $,
which implies the mesh is distributed uniformly with respect to arc-length. 
However, unlike general moving mesh methods, we conducted
further transformation with respect to $\theta$ and 
combine the system of PDEs (the SP equation and MMPDE) into a single PDE (the SG equation). 
\end{remark}

\begin{figure}[htbp]
\centering
\begin{tikzpicture}[line width= 0.5pt,xscale = 0.8,yscale = 1]
\node at ( -4,0) {Physical domain};
\node at ( -3.9, -0.4) {(SP)};
\draw[->] (-6,-1) -- (-2,-1);
\node at (-1.7,-1) {$x$};
\draw (-5.5,-1.2) -- (-5.5,-0.8);
\node at (-5.5,-1.5) {$0$};
\draw (-2.5,-1.2) -- (-2.5,-0.8);
\node at (-2.5,-1.5) {$L(\tau )$};
\node at (4,0) {Computational domain};
\node at (4.1,-0.4) {(SG)};
\draw[->] (2,-1) -- (6,-1);
\node at (6.3,-1) {$s$};
\draw (5.5,-1.2) -- (5.5,-0.8);
\node at (5.5,-1.5) {$S(\tau)$};
\draw (2.5,-1.2) -- (2.5,-0.8);
\node at (2.5,-1.5) {$0$};
\node at (-4,-2) {$u(0,x)$};
\draw[<->] (-4,-2.3) -- (-4,-2.7);
\draw[loosely dashed] (-6,-2.5) -- (6,-2.5);
\node at (-4,-3) {$ (x(0,s) , u(0,s) )$};
\draw (-5.9,-1.8) -- (-6,-1.8) -- (-6,-3.2) -- (-5.9,-3.2);
\node at (-6.7,-2.5) {$t=0$}; 
\draw[->] (-2,-3) -- (2,-3);
\node at (0,-3.5) {hodograph trans.};
\node at (4,-3) {$\theta (0,s) \ (S(0) := S)$};
\draw[->] (4,-3.3) -- (4,-3.7);
\node at (4,-4) {$ \theta (\tau,s) $};
\draw[->] (2,-4) -- (-2,-4);
\node at (-4,-4) {$ ( x(\tau,s) , u (\tau,s)) $};
\node at (-4,-5) {$ L (\tau) = \mbox{const.}$};
\draw (-1.4,-5.05) -- (1.4, -5.05);
\draw (-1.4,-4.95) -- (1.4,-4.95);
\draw (1.35,-4.9) -- (1.45,-5) -- (1.35,-5.1);
\node at (0,-4.7) {\cref{prop_arclength}};
\node at (4,-5) {$ S(\tau) = \mbox{const.}$};
\node at (-4,-6) {$ L (\tau) = \mbox{const.}$};
\draw (-1.4,-6.05) -- (1.4, -6.05);
\draw (-1.4,-5.95) -- (1.4,-5.95);
\draw (-1.35,-5.9) -- (-1.45,-6) -- (-1.35,-6.1);
\node at (0,-5.7) {\cref{propai}};
\node at (4,-6) {$ S(\tau ) = \mbox{const.}$};
\end{tikzpicture}
\caption{The diagram of our strategy for self-adaptive moving mesh integration. }
\label{fig_sg_diagram}
\end{figure}
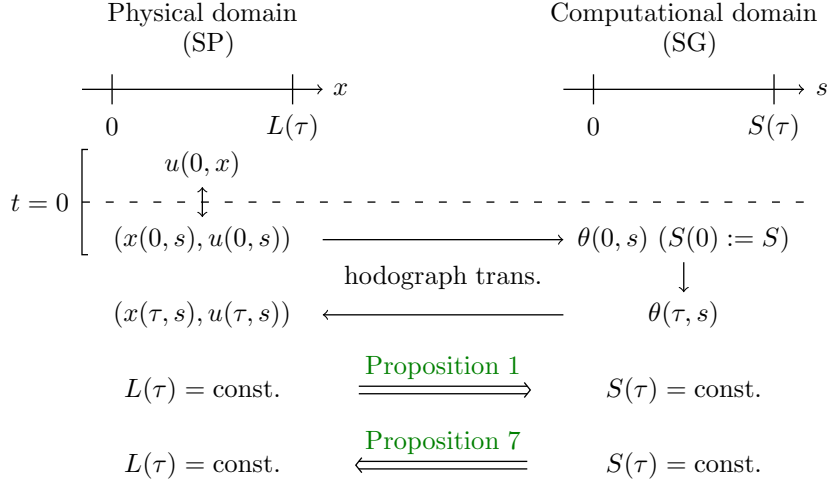


The discussion above has been already roughly done
in the literature, but here we emphasize that the above reveals
necessary constraint on the base point (see also \cref{rem:R}).
Still, we should note that the deformations in the discussion
above are not always mathematically rigorously equivalent,
and we have to be careful
exactly in which sense the solution
of one equation solves the other.
It seems this point as well has not been explicitly discussed 
in the literature.

We first show that  any solution of the SG equation~\cref{eq_sge}
solves the SP equation 
if the hodograph transformation satisfies the additional constraint 
\begin{equation}
x_{\tau} (\tau,0) = - \frac{\left(\theta_{\tau} (\tau,0) \right)^2}{2}, \qquad u (\tau,0) = \theta_{\tau} ( \tau ,0). 
\label{abp}
\end{equation}
Note that here we leave the boundary condition open,
and by ``solution'' we mean any functions that satisfy the equation 
on the specified interval.

\begin{proposition}
Let $ \theta $ be a solution of the SG equation~\cref{eq_sge}
on $s\in[0,S(\tau)]$, 
and $ \left( x , u \right) $ be a curve obtained via the hodograph transformation~\cref{ht} from $ \left( x(\tau,0), u(\tau,0) \right) $ 
satisfying~\cref{abp}. 
Then, $ \left( x, u \right)$ is a solution of the parametrized form~\cref{eq_spemg} of the SP equation on $x\in[0,L(\tau)]$. 
\label{prop_sge2spe}
\end{proposition}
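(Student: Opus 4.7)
The plan is to reduce the parametrized SP equation~\cref{eq_spemg} to the much simpler relation~\cref{spe2sge1}, and then show that under the hypotheses \emph{both sides} of~\cref{spe2sge1} vanish pointwise on the entire $s$-interval. First I would substitute the hodograph identities~\cref{iht} and their $s$- and $\tau$-derivatives into~\cref{eq_spemg}. Since $x_s^2 + u_s^2 = \cos^2\theta + \sin^2\theta = 1$, and direct computation (using $x_{ss} = -\theta_s \sin\theta$, $u_{ss} = \theta_s \cos\theta$, $x_{\tau s} = -\theta_\tau \sin\theta$, $u_{\tau s} = \theta_\tau \cos\theta$) gives the neat identities
\[
u_{\tau s} x_s - x_{\tau s} u_s = \theta_\tau, \qquad u_{ss} x_s - x_{ss} u_s = \theta_s,
\]
the whole of~\cref{eq_spemg} collapses into $(\theta_\tau - u)\cos\theta = \left( x_\tau + u^2/2 \right) \theta_s$, which is exactly~\cref{spe2sge1}.

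Next, I would introduce the auxiliary functions $f(\tau,s) := \theta_\tau - u$ and $g(\tau,s) := x_\tau + u^2/2$. The base-point assumption~\cref{abp} is precisely $f(\tau,0) = 0$ and $g(\tau,0) = 0$. Differentiating $f$ in $s$ and invoking the SG equation~\cref{eq_sge} together with $u_s = \sin\theta$ yields
\[
f_s = \theta_{\tau s} - u_s = \sin\theta - \sin\theta = 0,
\]
so $f \equiv 0$, i.e., $\theta_\tau = u$ on the entire interval. Differentiating $g$ in $s$ and using $x_s = \cos\theta$, $u_s = \sin\theta$, together with the identity $f \equiv 0$ just obtained, gives
\[
g_s = x_{\tau s} + u\, u_s = -\theta_\tau \sin\theta + u \sin\theta = -f \sin\theta = 0,
\]
hence $g \equiv 0$. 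Substituting $f = g = 0$ into~\cref{spe2sge1} makes both sides vanish, which is~\cref{eq_spemg}.

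The core of the argument is just careful bookkeeping; the only real insight is that the base-point condition~\cref{abp} supplies precisely the two integration constants needed to upgrade the zero-derivative identities $f_s = 0$, $g_s = 0$ into the pointwise identities $f \equiv 0$, $g \equiv 0$. Unlike the forward passage from the SP to the SG equation in the surrounding text, this reverse direction requires no non-degeneracy hypothesis such as $\cos\theta \neq 0$ or $\theta_s \neq 0$; the argument goes through as long as $\theta$ is smooth enough for $\partial_\tau$ and $\partial_s$ to commute. The main potential obstacle is not in the computation but in recognising that the reduction of the awkward-looking~\cref{eq_spemg} to~\cref{spe2sge1} is in fact an algebraic identity for arc-length-parametrised curves, after which the rest is almost automatic.
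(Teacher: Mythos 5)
Your proposal is correct and follows essentially the same route as the paper: both reduce \cref{eq_spemg} to \cref{spe2sge1} via the hodograph identities, then use the SG equation together with the base-point condition \cref{abp} to force $\theta_\tau - u \equiv 0$ and $x_\tau + u^2/2 \equiv 0$ on the whole interval, so that both sides of \cref{spe2sge1} vanish. Your differential bookkeeping ($f_s = 0$ and $g_s = 0$ with zero data at $s = 0$) is simply the derivative form of the paper's direct integrations yielding $u(\tau,s) = \theta_\tau(\tau,s)$ and $x_\tau(\tau,s) = -\left(\theta_\tau(\tau,s)\right)^2/2$.
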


\begin{proof}
From the condition~\cref{abp}, we see
\begin{align}
x_{\tau} (\tau,s) &= x_{\tau} (\tau,0 ) + \int^s_0 \left( \cos \theta (\tau, \sigma) \right)_{\tau} \rd \sigma = x_{\tau} (\tau,0) - \int^s_0  \sin \theta ( \tau , \sigma) \theta_{\tau} ( \tau ,\sigma ) \rd \sigma \notag \\
&=x_{\tau} (\tau,0) - \int^s_0 \theta_{\tau s} ( \tau , \sigma) \theta_{\tau} (\tau ,\sigma) \rd \sigma = x_{\tau} (\tau,0) - \left[ \frac{ \left( \theta_{\tau} ( \tau ,\sigma) \right)^2 }{2} \right]_0^s \notag \\
&= - \frac{ \left( \theta_{\tau} (\tau ,s) \right)^2 }{2}, \notag \\
u (\tau ,s)  &= u (\tau ,0) + \int^s_0 \theta_{\tau s} (\tau ,\sigma) \rd \sigma = u (\tau ,0) + \left[ \theta_{\tau} (\tau, \sigma) \right]^s_0 = \theta_{\tau} (\tau,s), \label{sg2sp_cond}
\end{align}
and therefore we obtain~\cref{eq_condxy}.
This and \cref{sg2sp_cond} imply the relation~\cref{spe2sge1} holds. 
Hence, $ (x,u ) $ is a solution of~\cref{eq_spemg}. 
\end{proof}

Furthermore, we can prove the converse under the assumption $ x_s \neq 0 $, i.e., as far as the solution is single-valued. 
This is related to the danger of using the parametrized form~\cref{eq_spemg}
when $x_s = 0$ can happen;
this corresponds to the ``mesh entanglement'' issue
in the context of the standard moving mesh method~\cite{BHR2009}.

\begin{proposition}
Let $ u , x $ be a solution of the parametrized SP equation~\cref{eq_spemg} 
on $x\in[0,L(\tau)]$
satisfying the condition~\cref{eq_condxy}.
We assume $ x_s ( \tau ,s ) \neq 0 $ holds for any $ \tau \in \R_+ $ and $ s \in \R $. 
Then, the parameter $ s$ represents arc-length, i.e., the hodograph transformation~\cref{ht} and \cref{iht} make sense and $ \theta $ is uniquely determined. 
Furthermore, $ \theta $ is a solution of the SG equation~\cref{eq_sge}
on $s\in[0,S(\tau)]$. 
\label{prop_spe2sge}
\end{proposition}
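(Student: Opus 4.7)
The plan is to organize the argument in three stages: first, extract from \cref{eq_spemg} together with the constraint \cref{eq_condxy} a $\tau$-conservation law for the squared speed $x_s^2 + u_s^2$, so that (once imposed) the arc-length parametrization persists for all $\tau$; second, use this to define the hodograph angle $\theta$ via \cref{iht}, unique after a base-point convention; third, obtain $\theta_\tau = u$ by comparing two expressions for $u_{s\tau}$, and then conclude \cref{eq_sge} by one further $s$-differentiation.

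For the first stage I would differentiate \cref{eq_condxy}, namely $x_\tau = -u^2/2$, with respect to $s$ to get $x_{\tau s} = -u u_s$. Substituting both this identity and $x_\tau + u^2/2 = 0$ into \cref{eq_spemg} kills the last group on the right, and after dividing through by $x_s^2$ (permitted by the hypothesis $x_s \neq 0$) the remaining relation collapses to $u_{\tau s} = u x_s$. The two identities then combine to give $\partial_\tau (x_s^2 + u_s^2) = 2 x_s(-u u_s) + 2 u_s (u x_s) = 0$, so the squared speed is $\tau$-invariant. Hence, provided the parametrization at the initial time is by arc-length, $x_s^2 + u_s^2 \equiv 1$, and \cref{iht} determines $\theta$ modulo $2\pi$; fixing $\theta(\tau,0)$ in a manner compatible with the base-point convention analogous to \cref{abp} makes $\theta$ single-valued and unique.

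For the last stage, differentiating $u_s = \sin\theta$ in $\tau$ gives $u_{s\tau} = (\cos\theta)\,\theta_\tau = x_s\,\theta_\tau$. Equating with the earlier $u_{s\tau} = u x_s$ and invoking $x_s \neq 0$ yields $\theta_\tau = u$; one more $s$-differentiation produces $\theta_{\tau s} = u_s = \sin\theta$, which is \cref{eq_sge}. The principal obstacle, I expect, is the first stage: recognizing that \cref{eq_condxy} is exactly the condition under which \cref{eq_spemg} reduces to a nonlinear conservation law for the arc-length density, and confirming that the non-degeneracy hypothesis $x_s \neq 0$ enters twice---once to eliminate $x_s$ in obtaining $u_{\tau s} = u x_s$, and once more at the very end to invert $x_s(\theta_\tau - u) = 0$.
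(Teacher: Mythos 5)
Your proposal is correct and follows essentially the same route as the paper: the heart of the argument---differentiating \cref{eq_condxy} to get $x_{\tau s}=-uu_s$, reducing \cref{eq_spemg} to $u_{\tau s}=ux_s$ using $x_s\neq 0$, and concluding $\partial_\tau\left(x_s^2+u_s^2\right)=0$ so that the arc-length parametrization persists---is exactly the paper's computation. The only (harmless) difference is the last step, where you derive $\theta_\tau=u$ by equating $u_{s\tau}=x_s\theta_\tau$ with $u_{\tau s}=ux_s$ and then differentiate in $s$, whereas the paper instead invokes the previously established identity \cref{spe2sge2} together with $x_\tau+u^2/2=0$ and $\cos\theta=x_s\neq 0$; your variant is self-contained and even slightly stronger (it recovers $\theta_\tau=u$ explicitly), and your explicit remark that the initial-time normalization $x_s^2+u_s^2\equiv 1$ is needed is a point the paper leaves implicit.
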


\begin{proof}
Under the assumption $ x_s \neq 0 $, 
by substituting \cref{eq_condxy} and $ x_{ \tau s } = - u u_s $ (differentiation of \cref{eq_condxy} with respect to $s$) into \cref{eq_spemg}, we see $u_{ \tau s } = u x_s$.
Thus, we see 
\begin{equation*}
\frac{\partial}{\partial \tau } \left( x_s^2 + u_s^2 \right) 
= 2 \left( x_s x_{s \tau } + u_s u_{s \tau} \right)
= 2 \left( x_s \left( - u u_s \right) + u_s \left( u x_s \right) \right) = 0.
\end{equation*}
Hence, the parameter $ s $ represents arc-length 
and the former part of the proposition holds true. 
Moreover, since the solution of \cref{eq_spemg} satisfies \cref{spe2sge2} and we assume 
$ x_{\tau} + u^2/2 = 0 $, we see $ ( \theta_{\tau s} - \sin \theta ) \cos \theta = 0 $. 
Finally, this equality and the assumption $ x_s = \cos \theta \neq 0 $ implies that $ \theta_{\tau s} = \sin \theta $ holds.
\end{proof}

The discussion above (\cref{prop_sge2spe,prop_spe2sge}) 
rigorously justifies our strategy: 
we compute the numerical solution of the SG equation, 
and then we transform the solution by some discrete counterpart of the hodograph transformation~\cref{ht}.

\subsection{The fixed physical and computational domains}
\label{sec_fixed}

Next we prove that the lengths of the physical and computational
domains are kept constant when the boundary condition is appropriate.

From \cref{prop_arclength}, it is easy
to see that $S(\tau) = \mathcal{S}(u(\tau))$ does not depend on time
under typical boundary conditions (including the periodic 
case) if $L(\tau)$ is fixed.

Its converse requires more careful discussion.
To this end, it is essential to recall the fact that
the SG equation~\cref{eq_sge} can be written in the variational form 
\begin{align}
\frac{\partial^2}{\partial \tau \partial s } \theta &= \frac{ \delta \mathcal{H} }{ \delta  \theta }, &
\mathcal{H} (\theta) &= - \int^S_0 \cos \theta (\tau,s)  \rd s.
\label{sgevf}
\end{align}

The value of the functional $ \mathcal{H} $ is kept invariant 
under certain boundary conditions.

\begin{proposition}
\label{prophp}
Under a boundary condition satisfying
\begin{equation}
\left[ \frac12 \left( \theta_{\tau} (\tau,s) \right)^2 \right]^S_0 = 0, 
\label{sgbcond}
\end{equation}
the solution of the variational form~\cref{sgevf} satisfies $ ( \rd / \rd \tau ) \mathcal{H} ( \theta( \tau) ) = 0 $ for any $ \tau \in \R_+ $. 
\end{proposition}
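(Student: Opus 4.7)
The plan is to differentiate $\mathcal{H}(\theta(\tau))$ with respect to $\tau$ directly and show that the resulting integrand is a total $s$-derivative, so that it reduces to a boundary term which vanishes by hypothesis~\cref{sgbcond}.

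First, I would bring the time derivative inside the spatial integral (assuming enough smoothness) and compute
\begin{equation*}
\frac{\rd}{\rd \tau} \mathcal{H}(\theta(\tau)) = -\int_0^S \frac{\partial}{\partial \tau} \cos \theta(\tau,s) \, \rd s = \int_0^S \sin \theta(\tau,s) \, \theta_\tau(\tau,s) \, \rd s.
\end{equation*}
Next, I would invoke the SG equation $\theta_{\tau s} = \sin \theta$ (equivalently, the Euler--Lagrange form~\cref{sgevf}) to substitute $\sin \theta = \theta_{\tau s}$ into the integrand, obtaining $\int_0^S \theta_{\tau s} \theta_\tau \, \rd s$. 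The key observation is that this integrand is exactly $\partial_s \bigl( \tfrac{1}{2}(\theta_\tau)^2 \bigr)$, so the integral collapses to $\bigl[\tfrac{1}{2}(\theta_\tau)^2\bigr]_0^S$, which is zero by the assumed boundary condition~\cref{sgbcond}.

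There is essentially no obstacle here: the computation is a three-line chain rule plus one substitution plus one antiderivative recognition. The only mild subtlety is that the upper limit $S = S(\tau)$ is, a priori, allowed to depend on $\tau$; however, this proposition is stated in the regime where we treat $S$ as fixed when differentiating under the integral, and indeed the purpose of the invariance of $\mathcal{H}$ (via \cref{propai}, cited in the diagram \cref{fig_sg_diagram}) is precisely to justify that $S(\tau)$ stays constant. So no Leibniz boundary correction is required at this stage. The proof therefore proceeds by the direct three-step calculation outlined above.
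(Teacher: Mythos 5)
Your proposal is correct and is essentially identical to the paper's own proof: both differentiate $\mathcal{H}$ under the integral sign, use the variational form~\cref{sgevf} to replace $\sin\theta = \delta\mathcal{H}/\delta\theta$ by $\theta_{\tau s}$, recognize $\theta_{\tau s}\theta_\tau = \partial_s\bigl(\tfrac12 \theta_\tau^2\bigr)$, and invoke~\cref{sgbcond}. One small aside: your attribution of the constancy of $S(\tau)$ to \cref{propai} is backwards---\cref{propai} uses this proposition to conclude $L(\tau)$ is constant, while the constancy of $S(\tau)$ follows from \cref{prop_arclength}---but this does not affect the validity of your argument, since the proposition is indeed stated on a fixed computational domain $[0,S]$.
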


\begin{proof}
It can be shown by using~\cref{sgevf} as follows: 
\begin{align*}
\frac{\rd}{\rd \tau} \mathcal{H} (\theta) 
= \int^S_0 \frac{ \delta \mathcal{H} }{\delta \theta } \theta_{\tau} \rd s 
= \int^S_0 \theta_{\tau s} \theta_t \rd s 
= \left[ \frac{1}{2} \left( \theta_{\tau} (\tau,s) \right)^2 \right]^S_0
= 0. 
\end{align*}
\end{proof}

The following lemma shows that the ``window size in the physical domain,''
i.e., the value of $L(\tau)$,
is given by the functional $\mathcal{H}(\theta)$.

\begin{lemma} \label{lemL}
Suppose that $ x,u : \R_+ \times [0,S] \to \R $ are obtained from $ \theta : \R_+ \times [0,S] \to \R $ by~\cref{ht}. 
Then, the relation $ \mathcal{L} \left( x(\tau) , u(\tau) \right) = - \mathcal{H} \left( \theta(\tau) \right) $ holds for any $ \tau \in \R_+ $, 
where the functional $ \mathcal{L} : C[0,S] \times C[0,S] \to \R  $ is defined as 
$ \mathcal{L} \left( f , g \right) := f(S) - f(0) $
for $ f,g \in C[0,S] $. 
\end{lemma}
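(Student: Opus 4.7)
The plan is to prove \cref{lemL} by direct substitution; this is really a one-line unwinding of the definitions rather than a deep result, so the task is mostly to make sure the correspondences line up.

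First I would evaluate $\mathcal{L}(x(\tau), u(\tau))$: by definition, $\mathcal{L}(f,g) = f(S) - f(0)$, so
\[
\mathcal{L}(x(\tau), u(\tau)) = x(\tau, S) - x(\tau, 0).
\]
Next I would use the first component of the hodograph transformation \cref{ht}, namely
\[
x(\tau, s) = x(\tau, 0) + \int_0^s \cos\theta(\tau, \sigma)\, \rd\sigma,
\]
and evaluate this at $s = S$. Subtracting $x(\tau,0)$ yields
\[
x(\tau, S) - x(\tau, 0) = \int_0^S \cos\theta(\tau, \sigma)\, \rd\sigma.
\]

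Finally I would compare this with the definition of $\mathcal{H}$ in \cref{sgevf}, which gives $\mathcal{H}(\theta(\tau)) = -\int_0^S \cos\theta(\tau,s)\, \rd s$. Combining the two displays immediately yields $\mathcal{L}(x(\tau), u(\tau)) = -\mathcal{H}(\theta(\tau))$, as required.

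Since the proof is a direct calculation, there is no real obstacle; the only thing to be careful about is that the second argument $u$ of $\mathcal{L}$ plays no role in its value (the functional only reads off the boundary values of its first slot), so no use of the second hodograph relation for $u$ is needed. The significance of the lemma, however, is conceptual: it identifies the physical window length with the SG Hamiltonian, so combined with \cref{prophp} it will immediately give the fixedness of $L(\tau)$ that is required by the diagram in \cref{fig_sg_diagram}.
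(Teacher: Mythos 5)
Your proof is correct and coincides with the paper's (the paper omits a written proof of \cref{lemL} as immediate, but its discrete counterpart, \cref{lem_Ld}, is proved by exactly this one-line substitution of the hodograph relation into $\mathcal{L}$ and comparison with the definition of $\mathcal{H}$). Your remark that the second argument $u$ and the relation for $u_s$ play no role is also accurate.
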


Thanks to \cref{lemL,prophp}, 
the window size remains constant as far as the boundary condition is 
appropriate.

\begin{proposition}
Let $ \theta : \R_+ \times [0,S ] \to \R $ be the solution of the SG equation~\cref{eq_sge} satisfying the condition~\cref{sgbcond}. 
If $ x,u : \R_+ \times [0,S] \to \R $ is obtained from $ \theta $ by~\cref{ht}, 
then $ (\rd / \rd \tau) \mathcal{L} ( x(\tau) , u (\tau) ) = 0 $ holds for any $ \tau \in \R_+ $. 
\label{propai}
\end{proposition}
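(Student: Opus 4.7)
The plan is to obtain this as an immediate combination of the two results that immediately precede the statement, namely \cref{prophp} and \cref{lemL}. Since both of these are stated as available, the task reduces to a one-line chain of equalities, with essentially no independent computation left to perform.

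Concretely, I would first invoke \cref{lemL}, which gives the pointwise-in-$\tau$ identity
\[
\mathcal{L}\bigl( x(\tau), u(\tau) \bigr) = -\,\mathcal{H}\bigl(\theta(\tau)\bigr)
\]
for every $\tau \in \R_+$. Differentiating this identity in $\tau$ reduces the claim to showing $(\rd/\rd\tau)\mathcal{H}(\theta(\tau)) = 0$. Then I would apply \cref{prophp}: since $\theta$ solves the SG equation \cref{eq_sge} (equivalently its variational form \cref{sgevf}) and satisfies the boundary condition \cref{sgbcond}, the functional $\mathcal{H}(\theta(\tau))$ is conserved. Combining the two gives $(\rd/\rd\tau)\mathcal{L}(x(\tau), u(\tau)) = 0$, as desired.

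There is essentially no obstacle here beyond bookkeeping: \cref{lemL} carries the full geometric content (translating the physical-domain window length into a functional of $\theta$), while \cref{prophp} carries the full dynamical content (conservation of $\mathcal{H}$ under the SG flow with the stated boundary condition). The only thing worth double-checking in the write-up is that the hypothesis of \cref{lemL} is met — namely, that $(x,u)$ is indeed obtained from $\theta$ via \cref{ht} on the same interval $[0,S]$ — which is exactly what is assumed in the statement. Thus the proof can be presented as a two-line corollary, with no additional estimates or integrations by parts required beyond those already performed in \cref{prophp} and \cref{lemL}.
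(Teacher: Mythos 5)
Your proposal is correct and is exactly the paper's own argument: the paper presents \cref{propai} as an immediate consequence of \cref{lemL} and \cref{prophp} (``Thanks to \cref{lemL,prophp}, the window size remains constant\ldots'') and gives no separate proof. Your two-line chain---differentiate the identity $\mathcal{L}(x(\tau),u(\tau)) = -\mathcal{H}(\theta(\tau))$ and invoke the conservation of $\mathcal{H}$ under the boundary condition \cref{sgbcond}---is precisely the intended reasoning, including the check that the hypothesis of \cref{lemL} is satisfied.
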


\subsection{Treatment of the periodic boundary condition}
\label{subsec_bound_cond}

So far we have left the boundary condition open.
Now let us consider the particular case, the periodic boundary
condition, to complete our framework in the continuous case.

The periodic boundary condition $ u(t,x+L ) = u(t,x) \ (\forall t \in \R_+ , \  \forall x \in \R )$ for the original form of the SP equation~\cref{eq_spe} corresponds to the boundary conditions 
\begin{equation}
x( \tau ,s+S) = x( \tau ,s) + L , \quad u ( \tau ,s+S) = u( \tau ,s) \qquad (\forall \tau \in \R_+ , \ \forall s \in \R ),
\label{eq_pb}
\end{equation}
for the parametrized form~\cref{eq_spemg}. 
The following proposition provides the transformation of 
this to the world of $\theta$.
Recall that, thanks to \cref{prop_sge2spe}, the solution curve obtained from that of the SG equation 
is a solution of the (parametrized) SP equation. 
Hence, the following proposition tells us the 
correspondence between the SP equation on the periodic domain and the SG equation on the (almost) periodic domain. 

\begin{proposition} \label{prop_pbc}
Let $ \theta (\tau ,s) $ be the solution of the sine-Gordon equation~\cref{eq_sge} 
under the boundary condition
\begin{equation}
\theta (\tau,s+S) = \theta (\tau,s) + 2 n \pi \qquad ( \forall s \in [0,S] ) 
\label{eq_apb}
\end{equation}
for some $ n \in \mathbb{Z} $. 
If $ ( x(\tau,s) , u (\tau,s) ) $ is the curve obtained from $ \theta (\tau,s) $ 
via the hodograph transformation~\cref{ht} 
with $ ( x(\tau,0), u(\tau,0) ) $ satisfying~\cref{abp}, 
then the periodic boundary condition~\cref{eq_pb} holds. 
\end{proposition}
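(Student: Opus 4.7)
The plan is to reduce the two required relations in \cref{eq_pb} to easy consequences of the $2n\pi$-quasiperiodicity of $\theta$ combined with the base-point condition \cref{abp}. More precisely, I would prove (a) $u(\tau, s+S) = u(\tau, s)$ by identifying $u$ with $\theta_\tau$, and (b) $x(\tau, s+S) - x(\tau, s) = L$ by using that $\cos\theta$ is genuinely $S$-periodic in $s$ and invoking \cref{lemL} together with \cref{propai} to pin down the constant.

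For the $u$-equation, I would recall from the computation inside the proof of \cref{prop_sge2spe} that, under \cref{abp}, the telescoping
\[ u(\tau, s) = u(\tau, 0) + \int_0^s \theta_{\tau s}(\tau, \sigma)\,\rd\sigma = \theta_\tau(\tau, 0) + \theta_\tau(\tau, s) - \theta_\tau(\tau, 0) = \theta_\tau(\tau, s) \]
holds throughout. Differentiating the quasiperiodicity \cref{eq_apb} with respect to $\tau$ kills the $2n\pi$ constant and gives $\theta_\tau(\tau, s+S) = \theta_\tau(\tau, s)$, hence $u(\tau, s+S) = u(\tau, s)$ immediately.

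For the $x$-equation, I would write
\[ x(\tau, s+S) - x(\tau, s) = \int_s^{s+S} \cos\theta(\tau, \sigma)\,\rd\sigma, \]
and observe that $\cos\theta(\tau, \sigma + S) = \cos(\theta(\tau,\sigma) + 2n\pi) = \cos\theta(\tau,\sigma)$, so the integrand is $S$-periodic in its second slot. Consequently the integral of a full period does not depend on the starting point $s$, and equals $\int_0^S \cos\theta(\tau,\sigma)\,\rd\sigma = x(\tau, S) - x(\tau, 0) = \mathcal{L}(x(\tau), u(\tau))$. By \cref{lemL} this last quantity equals $-\mathcal{H}(\theta(\tau))$, and since \cref{eq_apb} yields $\theta_\tau(\tau, S) = \theta_\tau(\tau, 0)$, the boundary condition \cref{sgbcond} is automatic; so \cref{propai} (equivalently \cref{prophp}) assures that this value is constant in $\tau$. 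It therefore matches the $L$ prescribed by the original periodic condition on the SP side, completing the proof.

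The only mildly delicate point is the identification of the resulting $s$-period in $x$ with the fixed constant $L$ that appears in the SP periodic boundary \cref{eq_pb}; this is where \cref{lemL,propai} are needed so that the value is both independent of $s$ and of $\tau$. Everything else is a one-line consequence of $2\pi$-periodicity of the trigonometric functions and of the telescoping identity for $u$ derived under \cref{abp}.
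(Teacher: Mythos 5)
Your proof is correct and takes essentially the same route as the paper's: both reduce the shift over one period to $\int_0^S (\cos\theta, \sin\theta)\,\rd\sigma$ via the $2n\pi$-quasiperiodicity, pin down the $x$-increment as $-\mathcal{H}(\theta(\tau)) = -\mathcal{H}(\theta(0)) = L$ through \cref{lemL} and \cref{prophp} (noting \cref{eq_apb} implies \cref{sgbcond}), and kill the $u$-increment using the SG equation. Your identification $u = \theta_\tau$ followed by differentiating \cref{eq_apb} in $\tau$ is merely the paper's telescoping computation $\int_0^S \sin\theta\,\rd\sigma = \int_0^S \theta_{\tau s}\,\rd\sigma = \left[ \theta_\tau \right]_0^S = 0$ repackaged, so no genuinely different idea is involved.
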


\begin{proof}
From~\cref{eq_apb} and the definition of the hodograph transformation~\cref{ht}, we obtain 
\begin{align*}
\matbra{ \begin{array}{c} x( \tau,s+S) \\ u(\tau,s+S) \end{array} }  - \matbra{ \begin{array}{c} x(\tau,s) \\ u(\tau,s) \end{array} } 
&= \int^{s+S}_s \matbra{ \begin{array}{c} \cos \theta ( \tau , \sigma ) \\ \sin \theta ( \tau ,\sigma) \end{array} } \rd \sigma 
= \int^S_0 \matbra{ \begin{array}{c} \cos \theta (\tau , \sigma ) \\ \sin \theta (\tau,\sigma) \end{array} } \rd \sigma. 
\end{align*}
Hence, the boundary condition~\cref{eq_pb} is 
equivalent to the equation
\begin{equation*}
\int^S_0 \matbra{ \begin{array}{c} \cos \theta (\tau, \sigma ) \\ \sin \theta (\tau,\sigma) \end{array} } \rd \sigma = \matbra{ \begin{array}{c} L \\ 0 \end{array} }. 
\end{equation*}
The equality above can be verified by the simple calculation: 
\begin{align}
\int^S_0 \cos \theta (\tau, \sigma) \rd \sigma &= - \mathcal{H} \left( \theta (\tau) \right) = - \mathcal{H} \left( \theta (0) \right) = x(0,S) - x(0,0) = L,  \label{eq_cos_H} \\
\int^S_0 \sin \theta (\tau, \sigma ) \rd \sigma &= \int^S_0 \theta_{\tau s} (t, \sigma ) \rd \sigma = \left[ \theta_{\tau} (\tau, \sigma) \right]^S_0 = 0.  \label{eq_proof_y}
\end{align}
Here, we used \cref{prophp} and the boundary condition~\cref{eq_apb}. 
\end{proof}

Note that, the integer $ n $ is determined by the initial condition $ \theta (0,s) $, 
and remains constant along time evolution 
thanks to the continuity of the solution. 
Note also that the condition~\cref{eq_apb} satisfies~\cref{sgbcond},
and thus \cref{propai} holds;
i.e., $L$ does not depend on time.
This is extremely important, since ``periodic solution'' would lose
its meaning if $L(\tau)$ were changing.

\subsection{Recovery of the implicit constraint} \label{sec_implicit}

Since $L$ successfully becomes a constant, it makes sense to consider
the implicit constraint $\int_0^L u(t,x) {\rm d}x=0.$
This is successfully kept by the solution $u$ generated
from the solution $\theta$ of the SG equation.
Let $ x( \tau,s) $, $u( \tau,s) $ be the pair
obtained from the solution $ \theta( \tau , s)  $ 
of the SG equation under the boundary condition~\cref{eq_apb}. Then we see
\begin{align*}
\int^L_0 u(t,x) \rd x 
&= \int^S_0  u ( \tau, s ) x_s ( \tau ,s ) \rd s 
= \int^S_0 \left( u (\tau, 0 ) + \int^s_0 \sin \theta ( \tau , \sigma ) \rd \sigma \right) \cos \theta (\tau,s) \rd s \\
&=\int^S_0 \left( u (\tau, 0 ) + \int^s_0 \theta_{\tau s} ( \tau , \sigma ) \rd \sigma \right) \cos \theta (\tau,s) \rd s \\
&= \int^S_0 \left( u (\tau, 0 ) + \theta_{\tau} ( \tau , s ) - \theta_{\tau} (\tau , 0) \right) \cos \theta (\tau,s) \rd s \\
&= \left( u (\tau, 0 ) - \theta_{\tau} (\tau,0) \right) \int^S_0 \cos \theta ( \tau, s) \rd s + \frac{\rd}{\rd \tau} \int^S_0 \sin \theta (\tau,s) \rd s, 
\end{align*}
where the first and second term vanishes due to \cref{abp} and \cref{eq_proof_y}, respectively.

\section{Proposed method: a self-adaptive moving mesh scheme for the SP equation}
\label{sec_scheme}

We here propose the self-adaptive moving mesh scheme. 
First we show the outline in \cref{subsec_outline},
and its detail in the subsequent subsections.

\subsection{Outline}
\label{subsec_outline}

We give the outline of the proposed method for the SP equation.
Let us introduce the discrete variables $ \dthe{m}{k} $ approximating 
$ \theta ( m \Delta \tau , k \Delta s ) $ $(k \in \mathbb{Z} ; m = 0,1, \dots, M) $, 
where $ \Delta \tau $ and $ \Delta s $ are the mesh sizes. 
Here, in view of \cref{prop_pbc}, we impose the discrete boundary condition 
\begin{equation}
\dthe{m}{k+K} = \dthe{m}{k} + 2 n \pi \qquad ( k \in \mathbb{Z}; \ m = 0,1,\dots, M+1),
\label{eq_apbd}
\end{equation}
where $ n \in \mathbb{Z} $ is a fixed constant 
determined by the initial condition. 
We also introduce the vector
notation $ \dthe{m}{} := ( \dthe{m}{1} , \dots, \dthe{m}{K} )^{\top} $. 

The outline of the proposed method is defined as follows. 

\smallskip

\noindent 
{\bf Self-adaptive moving mesh method for the SP equation}
\begin{description}
\item[Step 0:] Prepare the initial condition $ \dthe{0}{k} \ (k \in \mathbb{Z} ) $ of the SG equation, and set $ n:= ( \dthe{0}{K} - \dthe{0}{0} )/ ( 2 \pi )  $ (\cref{subsec_ini}). 
\item[Step 1:] Compute $ \dthe{m}{k} \ (k=1,\dots,K; m = 1,\dots,M+1) $ by using~\cref{eq_sg_dvdm} under the boundary condition~\cref{eq_apbd} (\cref{subsec_dvdm}). 
\item[Step 2:] Compute $ \xd{m}{k} , \ud{m}{k} \ (k=1,\dots,K; m = 0, 1, \dots, M) $ by using \cref{eq_dhodom} with $ (\xd{m}{0}, \ud{m}{0} ) $ defined by \cref{eq_dhtm_t} (\cref{subsec_dht}). 
\end{description}

\subsection{Step 0: Preparation of the initial data $\dthe{0}{}$}
\label{subsec_ini}

We can think of two different cases depending on 
how the initial data for the SP equation is given.

The first possibility is the case that
the initial condition is directly given in
the parametrized form $( x (0,s) , u (0,s) ) $;
all multi-valued solutions, and some particular solutions
of interest such as~\cref{eq_spe_breather}, are given in this way.
In this case, we can prepare the initial data $\dthe{0}{}$ directly
in the computational domain.
We first choose an appropriate $S$,
the length of the computational
domain (possibly depending on the period of the solution).
Then $\dthe{0}{}$ can be computed by either of the following two ways.
If the initial condition is given analytically as functions,
the function $\theta(0,s)$ can be found easily by~\cref{iht} as
$\theta(0,s) = {\rm arctan} (u_s/x_s)$.
We can then evaluate the function on the grid points 
$s = k\Delta s$ ($k=1,\ldots,K$).
Cares are necessary to the argument range and on some points where $x_s=0$,
but this is straightforward by making the resulting function continuous.
If, on the other hand, $( x (0,s) , u (0,s) ) $ is given only numerically,
for example, only as a program subroutine,
and accordingly known only on the grid points,
we can replace the derivatives $u_s$ and $x_s$ above by 
some finite differences; for example, we can set (again with some care)
\begin{equation} \label{fd_approx}
\dthe{0}{k} = {\rm arctan} \left( \frac{\ud{0}{k} - \ud{0}{k-1}}{\xd{0}{k}-\xd{0}{k-1}} \right), \qquad (k=1,\ldots,K).
\end{equation}

The second possibility is that
the initial condition is given for the original SP equation~\cref{eq_spe}
as $u_0(x)$, which is 
more natural setting as an initial value problem for~\cref{eq_spe},
although it intrinsically allows only single-valued solutions.
In this case, a possible strategy to
prepare $\dthe{0}{}$ is as follows.
We first find the total arc-length $\mathcal{S}(u_0)$ by
an accurate numerical integration (recall~\cref{subsec_arc}),
and set $S = \mathcal{S}(u_0)$.
The remaining task is to find $\xd{0}{}$ and $\ud{0}{}$
on the grid points $s = k\Delta s$,
which reduces the discussion to the first case above.
An obvious way to do this is to numerically solve
for each $k$ ($k=1,\ldots,K$)
\[
\int_0^x \sqrt{1+(u_{0,\xi}(\xi))^2} {\rm d}{\xi} = k\Delta s
\]
by, for example, the Newton iteration,
to obtain $\xd{0}{k}$
(in above, $u_{0,\xi}$ means $\partial u_0/\partial \xi$).
 Then we can set $\ud{0}{k} = u_0(\xd{0}{k})$.
Another possible way is to employ a method to realize
the ``equidistribution principle''
in the standard moving mesh method~\cite[Chap.~2]{HR2011}.
In the setting of this paper, the distance is measured by
the arc-length (i.e., in terms of the mesh density function,
$\rho(x) = \sqrt{1+{u_x}^2}$, in the notation of~\cite[Section~2.4.3]{HR2011}).
Note that, however, in the standard moving mesh method,
often a rough realization of equidistribution is sufficient
for practical purposes, and the resulting grid may not be strictly
equidistributing, depending on the algorithm.
This can cause a severe accuracy degeneration in preparing $\dthe{0}{}$
in the present context, since we demand $s$ to be the strict 
arc-length.
In this sense, we should employ an accurate equidistribution method,
or alternatively,
a combination of a rough method and the Newton iteration refinement.

\subsection{Step 1: Solving the SG equation by a conservative method}
\label{subsec_dvdm}

As seen in \cref{sec_fixed}, 
the preservation of the functional $\mathcal{H}$
in the SG equation is indispensable
for keeping the physical window size constant.
The discrete variational derivative method~\cite{F1999}
(DVDM; see also Furihata--Matsuo~\cite{FM2011})
gives a way to construct such a special scheme.
Although the original DVDM did not cover the SG equation~\cref{sgevf},
where an additional differential operator with respect to the spatial variable $s$ is included in the left hand side,
later it has been extended to accommodate such 
cases~\cite{MYM2012,YMS2010}. 
Furthermore, recently, a scheme with better stability was
proposed in Furihata--Sato--Matsuo~\cite{FSM2016+}. 
Below we show a scheme for the SG equation employing their technique.

Let us first introduce the spatial forward difference and average operators 
\begin{align*}
\fd_s \dthe{m}{k} &= \frac{ \dthe{m}{k+1} - \dthe{m}{k} }{\Delta s}, & 
\fa_s \dthe{m}{k} &= \frac{ \dthe{m}{k+1} + \dthe{m}{k} }{2}, 
\end{align*}
and the spatial backward difference and average operators 
\begin{align*}
\bd_s \dthe{m}{k} &= \frac{ \dthe{m}{k} - \dthe{m}{k-1} }{\Delta s}, & 
\ba_s \dthe{m}{k} &= \frac{ \dthe{m}{k} + \dthe{m}{k-1} }{2}. 
\end{align*}
Here, the subscript $ s $ denotes the spatial operator, 
and the temporal counterparts $ \fd_{\tau} , \ \fa_{\tau}, \ \bd_{\tau} , \ \ba_{\tau} $ are similarly defined. 

First of all, we define a discrete version $ \mathcal{H}_{\rd} $ of the functional $ \mathcal{H} $ as
\begin{equation*}
\mathcal{H}_{\rd} (\dthe{m}{}) = - \sum_{k=1}^{K} \cos \dthe{m}{k} \Delta s.  
\end{equation*}
Then, the discrete variational derivative $ \delta \mathcal{H}_{\rd} / \delta ( \dthe{m+1}{}, \dthe{m}{} )_k $ of $ \mathcal{H}_{\rd} $ is defined as a function satisfying the relation
\begin{equation*}
\fd_{\tau} \mathcal{H}_{\rd} \left( \dthe{m}{} \right) = \sum_{k=1}^K \frac{ \delta \mathcal{H}_{\rd} }{\delta ( \dthe{m+1}{} , \dthe{m}{} )_k } \fd_{\tau} \dthe{m}{k} \Delta s, 
\end{equation*}
and in this case, it can be derived as 
\begin{equation*}
\frac{ \delta \mathcal{H}_{\rd} }{\delta ( \dthe{m+1}{} , \dthe{m}{} )_k } = - \frac{ \cos \dthe{m+1}{k} - \cos \dthe{m}{k} }{ \dthe{m+1}{k} - \dthe{m}{k} } \quad ( k = 1,2, \dots, K ).
\label{dvd_sge}
\end{equation*}
By using this, we define an average-difference scheme
\begin{equation}
\fd_s \fd_{\tau} \theta_k^{(m)} = \fa_s \frac{ \delta \mathcal{H}_{\rd} }{\delta ( \dthe{m+1}{} , \dthe{m}{} )_k }. 
\label{eq_sg_dvdm}
\end{equation}

\begin{proposition}[cf. \protect{\cite[Theorem~1]{FSM2016+}}]
Let $ \dthe{m}{k} $ be the solution of the discrete variational derivative method~\cref{eq_sg_dvdm} 
with a boundary condition satisfying 
\begin{equation*}
\left[ \left( \fd_{\tau} \dthe{m}{k} - \frac{\Delta s}{2} \frac{ \delta \mathcal{H}_{\rd} }{\delta ( \dthe{m+1}{} , \dthe{m}{} )_k} \right)^2 \right]^{K+1}_1 = 0.
\label{sgdvdmbcond}
\end{equation*}
Then, it holds that $ \mathcal{H}_{\rd} (\dthe{m+1}{}) = \mathcal{H}_{\rd} ( \dthe{m}{} )$ for any $ m = 0,\dots, M$.  
\label{propdvdm}
\end{proposition}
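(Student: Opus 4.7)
The plan is to reduce the claim to a summation-by-parts identity, following the standard DVDM discipline but adapted to the average-difference scheme of \cite{FSM2016+}. The overall strategy is: start from the telescoping identity $\mathcal{H}_{\rd}(\dthe{m+1}{}) - \mathcal{H}_{\rd}(\dthe{m}{}) = \Delta\tau\, \fd_\tau \mathcal{H}_{\rd}(\dthe{m}{})$, invoke the defining property of the discrete variational derivative to turn this into a sum over $k$, and then use the scheme~\cref{eq_sg_dvdm} to convert that sum into a telescoping expression that collapses to the boundary term appearing in the assumed boundary condition.

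The first step is to write
\[
\fd_\tau \mathcal{H}_{\rd}(\dthe{m}{}) \;=\; \sum_{k=1}^K \frac{\delta \mathcal{H}_{\rd}}{\delta(\dthe{m+1}{},\dthe{m}{})_k}\, \fd_\tau \dthe{m}{k}\, \Delta s
\]
by the definition of the discrete variational derivative. The second step, which is the crux of the argument, is to recognize the quantity that naturally appears in the hypothesized boundary condition, namely
\[
A_k \;:=\; \fd_\tau \dthe{m}{k} - \frac{\Delta s}{2}\frac{\delta \mathcal{H}_{\rd}}{\delta(\dthe{m+1}{},\dthe{m}{})_k}, \qquad B_k \;:=\; \fd_\tau \dthe{m}{k} + \frac{\Delta s}{2}\frac{\delta \mathcal{H}_{\rd}}{\delta(\dthe{m+1}{},\dthe{m}{})_k}.
\]
Expanding the scheme $\fd_s \fd_\tau \dthe{m}{k} = \fa_s \delta\mathcal{H}_{\rd}/\delta(\cdots)_k$ directly gives $B_k = A_{k+1}$; this is the discrete counterpart of the equivalence used in \cite{FSM2016+}. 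Moreover, by construction $\fd_\tau \dthe{m}{k} = (A_k + B_k)/2$ and $(\Delta s)\,\delta\mathcal{H}_{\rd}/\delta(\cdots)_k = B_k - A_k$.

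The third step is to substitute these identities into the sum:
\begin{align*}
\sum_{k=1}^K \frac{\delta \mathcal{H}_{\rd}}{\delta(\dthe{m+1}{},\dthe{m}{})_k}\, \fd_\tau \dthe{m}{k}\, \Delta s
&= \sum_{k=1}^K (B_k - A_k)\cdot \frac{A_k + B_k}{2}
= \frac{1}{2}\sum_{k=1}^K \bigl(B_k^2 - A_k^2\bigr) \\
&= \frac{1}{2}\sum_{k=1}^K \bigl(A_{k+1}^2 - A_k^2\bigr)
= \frac{1}{2}\bigl[A_k^2\bigr]_{1}^{K+1},
\end{align*}
where the telescoping uses $B_k = A_{k+1}$. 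This final expression is exactly the boundary quantity assumed to vanish, so $\fd_\tau \mathcal{H}_{\rd}(\dthe{m}{}) = 0$ and conservation follows.

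The main obstacle is spotting the right ``Abel-summation'' variables $A_k, B_k$; once they are identified (guided by the form of the boundary term in the proposition), the algebra is routine. The subtlety worth emphasizing in the write-up is that, unlike the standard DVDM where the test function in the summation-by-parts is simply $\fd_\tau \dthe{m}{k}$, here the extra $\fd_s$ on the left of the scheme forces us to split $\fd_\tau\dthe{m}{k}$ symmetrically around the discrete variational derivative, and it is precisely this symmetric splitting that makes the sum telescope cleanly.
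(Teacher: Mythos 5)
Your proof is correct, and every step checks out. Writing $a_k := \delta\mathcal{H}_{\rd}/\delta(\dthe{m+1}{},\dthe{m}{})_k$, the scheme~\cref{eq_sg_dvdm} multiplied through by $\Delta s$ reads $\fd_{\tau}\dthe{m}{k+1}-\fd_{\tau}\dthe{m}{k}=\frac{\Delta s}{2}\left(a_{k+1}+a_k\right)$, which is indeed exactly your relation $A_{k+1}=B_k$; the difference-of-squares telescoping then gives $\fd_{\tau}\mathcal{H}_{\rd}(\dthe{m}{})=\sum_{k=1}^K a_k\,\fd_{\tau}\dthe{m}{k}\,\Delta s=\frac{1}{2}\left[A_k^2\right]_1^{K+1}=0$ by the hypothesized boundary condition (the harmless factor $\frac{1}{2}$ aside), and the terms $A_1$, $A_{K+1}$ are well defined since $\dthe{m}{k}$ is defined for all $k\in\mathbb{Z}$. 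Note also that your first step uses no summation by parts at all: because $\mathcal{H}_{\rd}$ contains no spatial differences, the defining relation of the discrete variational derivative holds exactly with the explicit formula given in the paper.

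On comparison: the paper itself contains no proof of this proposition --- it defers to \cite[Theorem~1]{FSM2016+}, describing the argument there as ``rather complicated.'' The boundary condition in the statement is literally $\left[A_k^2\right]_1^{K+1}=0$ in your notation, so the cited proof necessarily arrives at the same telescoped boundary quantity; what your write-up contributes is the observation that the average-difference scheme is \emph{equivalent} to the one-step recursion $A_{k+1}=B_k$, after which conservation is a two-line Abel summation via $a_k\,\fd_{\tau}\dthe{m}{k}\,\Delta s=\frac{1}{2}\left(B_k^2-A_k^2\right)$. In particular you never need to invert or otherwise manipulate the average operator $\fa_s$ (whose poor invertibility properties are the usual source of complication for schemes of this family), so your argument is a genuinely streamlined, self-contained version of the cited result --- precisely the proof this paper could have included in place of the omission.
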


Since the proof of \cref{propdvdm} is rather complicated 
and the full proof has already appeared in \cite{FSM2016+}, 
we omit it here. 

\begin{remark} \label{rem_standard}
In discretizing \cref{sgevf}, one may be tempted to 
employ some standard skew-symmetric difference operators such as
the central difference operator, the compact difference operator (see, e.g.,~\cite{KMY2012}), and the 
Fourier-spectral difference operator~(see, e.g., \cite{Fornberg1996}),
since the skewness is basically the source of the conservation.
For example, 
by using the central difference operator $ \cd_s : \ud{m}{k} \mapsto ( \ud{m}{k+1} - \ud{m}{k-1} )/ 2 \Delta s$, 
which is the simplest skew-symmetric difference operator, 
we can also construct a conservative method 
\begin{equation}
\cd_s \fd_{\tau} \theta_k^{(m)} = \frac{ \delta \mathcal{H}_{\rd} }{\delta ( \dthe{m+1}{} , \dthe{m}{} )_k }. 
\label{eq_sg_dvdm_cd}
\end{equation}
Although the numerical method certainly conserves the value of $ \mathcal{H}_{\rd } $, it suffers from undesirable spatial oscillations (see, \cite{FSM2016+}).
\end{remark}

\subsection{Step 2: Obtaining the solution by a discrete hodograph transformation}
\label{subsec_dht}

The remaining task is to give a formula to transform $\dthe{m}{}$ 
back to $\xd{m}{}$ and $\ud{m}{}$.
If we employ the natural discretization 
\begin{equation}
\matbra{ \begin{array}{c} \xd{m}{k} \\[5pt] \ud{m}{k} \end{array} } = \matbra{ \begin{array}{c} \xd{m}{0} \\[5pt] \ud{m}{0} \end{array} } 
+ \sum_{i=1}^{k} \matbra{ \begin{array}{c} \cos \dthe{m}{i} \\[5pt] \sin \dthe{m}{i} \end{array} } \Delta s,
\label{eq_dht}
\end{equation}
of the hodograph transformation~\cref{ht}, 
the periodic boundary condition is not satisfied in the physical domain;
in fact,
$ \ud{m}{K} - \ud{m}{0} = \sum_{k=1}^K \sin \dthe{m}{k} \Delta s $ does not coincide with $0$. 
The reason lies in the fact that the discrete counterpart of~\cref{eq_proof_y} does not hold in general, 
due to the difference between the right-hand side of the discrete sine-Gordon equation~\cref{eq_sg_dvdm} and 
the $ \sin \dthe{m}{i} $ in~\cref{eq_dht}. 
For the case of periodic domain, this problem is serious, 
since it means $\ud{m}{K} \neq \ud{m}{0}$, and thus there appears a
gap in $u$ around the base point $\xd{m}{0}$.

We can overcome this difficulty  by carefully redefining the discrete hodograph transformation as
\begin{subequations}\label{eq_dhodom}
\begin{equation}\label{eq_dhtm}
\matbra{ \begin{array}{c} \xd{m}{k} \\[5pt] \ud{m}{k} \end{array} } = \matbra{ \begin{array}{c} \xd{m}{0} \\[5pt] \ud{m}{0} \end{array} } 
+ \sum_{i=1}^k \matbra{ \begin{array}{c} \cos \dthe{m}{i} \\[5pt] \ba_s \frac{\delta \mathcal{H}_{\rd} }{ \delta \left( \dthe{m+1}{} , \dthe{m}{} \right)_i } \end{array} } \Delta s,
\end{equation}
\begin{equation}\label{eq_dihtm}
\matbra{ \begin{array}{c} \bd_s \xd{m}{k} \\ \bd_s \ud{m}{k} \end{array} } = \matbra{ \begin{array}{c} \cos \dthe{m}{k} \\[5pt] \ba_s \frac{\delta \mathcal{H}_{\rd} }{ \delta \left( \dthe{m+1}{} , \dthe{m}{} \right)_k } \end{array} } . 
\end{equation}
\end{subequations}
To prove it works fine, we start by showing the following lemma,
which corresponds to \cref{lemL}.

\begin{lemma}
Suppose that $ ( \xd{m}{k} , \ud{m}{k} ) $ is obtained from $ \dthe{m}{k} $ by using~\cref{eq_dhtm}. 
Then, the relation $ \mathcal{L} \left( \xd{m}{} , \ud{m}{} \right) = - \mathcal{H} \left( \dthe{m}{} \right) $ holds for any $ m = 0,1,\dots, M $, 
where the functional $ \mathcal{L} $ is defined as $ \mathcal{L} ( \xd{m}{} , \ud{m}{} ) = \xd{m}{K} - \xd{m}{0} $. 
\label{lem_Ld}
\end{lemma}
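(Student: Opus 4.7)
The plan is to observe that this is the direct discrete analogue of \cref{lemL}, and that its proof reduces to an almost trivial telescoping argument once we examine the first component of \cref{eq_dhtm}. The key point is that the $x$-component of the redefined discrete hodograph transformation uses plain $\cos \dthe{m}{i}$ (not a discrete-variational-derivative-type expression, as does the $u$-component), and this matches exactly the summand defining $\mathcal{H}_{\rd}$.

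Concretely, I would set $k = K$ in the first row of \cref{eq_dhtm} to get
\begin{equation*}
\xd{m}{K} = \xd{m}{0} + \sum_{i=1}^{K} \cos \dthe{m}{i} \, \Delta s,
\end{equation*}
so that by the definition of $\mathcal{L}$,
\begin{equation*}
\mathcal{L}(\xd{m}{}, \ud{m}{}) = \xd{m}{K} - \xd{m}{0} = \sum_{i=1}^{K} \cos \dthe{m}{i} \, \Delta s = -\mathcal{H}_{\rd}(\dthe{m}{}),
\end{equation*}
using the definition $\mathcal{H}_{\rd}(\dthe{m}{}) = -\sum_{k=1}^{K} \cos \dthe{m}{k} \Delta s$ stated in \cref{subsec_dvdm}.

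There is no real obstacle here: the proof is essentially a one-line calculation, and the result holds for every $m$ independently, without needing to invoke the evolution equation \cref{eq_sg_dvdm}. The content of the lemma is really a design choice already made when redefining the discrete hodograph transformation in \cref{eq_dhtm}; the redefinition of the $u$-component (using the discrete variational derivative) was done precisely so that the analogue of \cref{eq_proof_y} will hold at the discrete level in the companion step, while the $x$-component was deliberately kept in its natural form so that the discrete identity corresponding to \cref{eq_cos_H} — which is the content of this lemma — remains immediate. The nontrivial work, combining this lemma with the conservation property of \cref{propdvdm} to conclude that the physical window size $\xd{m}{K} - \xd{m}{0}$ stays constant in $m$, is reserved for the subsequent proposition.
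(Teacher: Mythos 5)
Your proof is correct and is essentially identical to the paper's own one-line calculation: both evaluate the first component of \cref{eq_dhtm} at $k=K$ and observe that the telescoped sum $\sum_{i=1}^{K} \cos \dthe{m}{i}\, \Delta s$ is by definition $-\mathcal{H}_{\rd}(\dthe{m}{})$. If anything, your version is slightly more careful (you keep the $\Delta s$ factor, which the paper's displayed computation drops), and your remark that the lemma holds for each $m$ independently of the evolution equation \cref{eq_sg_dvdm} matches the paper's structure, which defers the conservation argument to \cref{thm_pbd}.
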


\begin{proof}
It can be shown by the simple calculation: 
\[ \mathcal{L} \left( \xd{m}{} , \ud{m}{} \right) = \xd{m}{K} - \xd{m}{0} = \sum_{i=1}^K \cos \dthe{m}{i} = - \mathcal{H}_{\rd} \left( \dthe{m}{} \right) . \]
\end{proof}

The value of the functional $ \mathcal{L} ( \xd{m}{} , \ud{m}{} ) $ represents 
the window size in the physical space. 
Thanks to the \cref{lem_Ld} and the consistent definition~\cref{eq_dhtm}, 
we can follow the line of the discussion in the proof of \cref{prop_pbc}.

\begin{theorem}\label{thm_pbd}
Let $ \dthe{m}{k} \ (k \in \mathbb{Z} ;\  m=0,\dots,M+1)$ be the solution of the discrete sine-Gordon equation~\cref{eq_sg_dvdm} under the boundary condition~\cref{eq_apbd} for some $ n \in \mathbb{Z} $. 
If $ ( \xd{m}{k} , \ud{m}{k} ) $ is the curve obtained from $ \dthe{m}{k} $ 
via the discrete hodograph transformation~\cref{eq_dhtm} with some $ ( \xd{m}{0} , \ud{m}{0} )$, 
then $ ( \xd{m}{k} , \ud{m}{k} ) $ satisfies the boundary condition 
\begin{equation}
\xd{m}{k+K} = \xd{m}{k} + L, \quad \ud{m}{k+K} = \ud{m}{k} \qquad (k \in \mathbb{Z} ; \  m = 0, \dots, M+1), 
\label{eq_pbd}
\end{equation}
where the constant $ L $ is determined by the initial condition $ L := - \mathcal{H}_{\rd} ( \dthe{0}{} ) $. 
\end{theorem}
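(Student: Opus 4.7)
The plan is to mirror the continuous proof of \cref{prop_pbc} at the discrete level, replacing each continuous identity with its discrete counterpart supplied by the scheme~\cref{eq_sg_dvdm} and the modified hodograph formula~\cref{eq_dhtm}. The first observation is that the almost-periodic boundary condition~\cref{eq_apbd} makes $\cos\dthe{m}{k}$, $\sin\dthe{m}{k}$, and the discrete variational derivative $\delta\mathcal{H}_{\rd}/\delta(\dthe{m+1}{},\dthe{m}{})_k$ all $K$-periodic in $k$. In particular the boundary hypothesis of \cref{propdvdm} is satisfied, so the discrete invariant $\mathcal{H}_{\rd}(\dthe{m}{})=\mathcal{H}_{\rd}(\dthe{0}{})=-L$ is preserved for every $m$.

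For the $x$-component I would combine the $K$-periodicity of the cosine summand with the preservation of $\mathcal{H}_{\rd}$ to obtain
\[
\xd{m}{k+K} - \xd{m}{k} = \sum_{i=k+1}^{k+K} \cos\dthe{m}{i}\,\Delta s = \sum_{i=1}^{K} \cos\dthe{m}{i}\,\Delta s = -\mathcal{H}_{\rd}(\dthe{m}{}) = L,
\]
which is the discrete analogue of~\cref{eq_cos_H} and essentially repeats the argument of \cref{lem_Ld}.

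The real work is in the $u$-component, and this is where the deliberate replacement of $\sin\dthe{m}{i}$ by $\ba_s$ of the discrete variational derivative in~\cref{eq_dhtm} pays off. I would use the elementary identity $\ba_s f_i = \fa_s f_{i-1}$ to reindex the sum and then invoke the scheme~\cref{eq_sg_dvdm} to obtain
\[
\ud{m}{k+K} - \ud{m}{k} = \sum_{j=k}^{k+K-1} \fa_s \frac{\delta\mathcal{H}_{\rd}}{\delta(\dthe{m+1}{},\dthe{m}{})_j}\,\Delta s = \sum_{j=k}^{k+K-1} \fd_s \fd_{\tau}\dthe{m}{j}\,\Delta s.
\]
This right-hand side telescopes in $j$, producing $\frac{1}{\Delta\tau}\bigl[(\dthe{m+1}{k+K}-\dthe{m+1}{k}) - (\dthe{m}{k+K}-\dthe{m}{k})\bigr]$, and~\cref{eq_apbd} makes both bracketed quantities equal to $2n\pi$, so they cancel. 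This is the discrete version of~\cref{eq_proof_y}.

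The main obstacle, and the reason the naive discretization~\cref{eq_dht} fails in the $u$-component, is precisely that without the $\ba_s$ construction in~\cref{eq_dhtm} the $u$-increment cannot be linked to the scheme~\cref{eq_sg_dvdm} and hence cannot be made to telescope; once the index-shift $\ba_s = \fa_s\circ(\text{shift})$ is recognised, everything reduces to $K$-periodicity in~\cref{eq_apbd} and the preservation of $\mathcal{H}_{\rd}$ guaranteed by \cref{propdvdm}.
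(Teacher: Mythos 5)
Your proof is correct and follows essentially the same route as the paper's own: the $x$-component via $K$-periodicity of the summands and the conservation of $\mathcal{H}_{\rd}$ guaranteed by \cref{propdvdm}, and the $u$-component via the reindexing $\ba_s a_i = \fa_s a_{i-1}$, substitution of the scheme~\cref{eq_sg_dvdm}, and a telescoping sum whose boundary terms cancel by~\cref{eq_apbd}. Your explicit check that \cref{eq_apbd} implies both the $K$-periodicity of the discrete variational derivative and the boundary hypothesis of \cref{propdvdm} is a small bit of diligence the paper leaves implicit, but it does not change the argument.
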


\begin{proof}
From the boundary condition~\cref{eq_apbd} and the definition~\cref{eq_dhtm} of the discrete hodograph transformation, we obtain 
\begin{align*}
\matbra{ \begin{array}{c} \xd{m}{k+K} \\[5pt] \ud{m}{k+K} \end{array} } - \matbra{ \begin{array}{c} \xd{m}{k} \\[5pt] \ud{m}{k} \end{array} } 
&= \sum_{i = k +1}^{k+K} \matbra{ \begin{array}{c} \cos \dthe{m}{i} \\[5pt] \ba_s \frac{\delta \mathcal{H}_{\rd} }{ \delta \left( \dthe{m+1}{} , \dthe{m}{} \right)_i } \end{array} } \Delta s
= \sum_{i =1}^{K} \matbra{ \begin{array}{c} \cos \dthe{m}{i} \\[5pt] \ba_s \frac{\delta \mathcal{H}_{\rd} }{ \delta \left( \dthe{m+1}{} , \dthe{m}{} \right)_i } \end{array} } \Delta s  . 
\end{align*}
Hence, the boundary condition~\cref{eq_pbd} is equivalent to the equation
\begin{equation*}
\sum_{i =1}^{K} \matbra{ \begin{array}{c} \cos \dthe{m}{i} \\[5pt] \ba_s \frac{\delta \mathcal{H}_{\rd} }{ \delta \left( \dthe{m+1}{} , \dthe{m}{} \right)_i } \end{array} } \Delta s = \matbra{ \begin{array}{c} L \\ 0 \end{array} }. 
\end{equation*}
The equality above can be verified by following the line of the discussion in the proof of \cref{prop_pbc} 
as follows: 
\begin{equation*}
\sum_{i =1}^{K} \cos \dthe{m}{i} \Delta s = - \mathcal{H}_{\rd} \left( \dthe{m}{} \right) = - \mathcal{H} \left(  \dthe{0}{} \right), \\ 
\end{equation*}
\begin{equation*}
\sum_{i=1}^{K} \ba_s \frac{\delta \mathcal{H}_{\rd} }{ \delta \left( \dthe{m+1}{} , \dthe{m}{} \right)_i } \Delta s = 
\sum_{i=0}^{K-1} \fa_s \frac{\delta \mathcal{H}_{\rd} }{ \delta \left( \dthe{m+1}{} , \dthe{m}{} \right)_i } \Delta s 
= \sum_{i=0}^{K-1} \fd_s \fd_{\tau} \dthe{m}{i} \Delta s = 0. \label{proof_byd}
\end{equation*}
Here, we used \cref{propdvdm} and the boundary condition~\cref{eq_apbd}. 
Note that the first identity exactly corresponds to~\cref{eq_cos_H}.
The second is formally different from~\cref{eq_proof_y},
although the correpondence holds in the continuous limit.
\end{proof}

Our final task is to give 
a method to compute $ ( \xd{m}{0} , \ud{m}{0} ) $. 
We can do this by defining a discrete counterpart 
of the condition~\cref{abp}, which describes the time evolution
of the base point.
Fortunately, \cref{thm_pbd} reveals that regardless of this choice
the discrete boundary condition is guaranteed.
Let us here choose a discrete counterpart  as follows: 
\begin{subequations}\label{eq_dhtm_t}
\begin{align}
\xd{m+1}{0} &= \xd{m}{0} -  \frac{ \Delta \tau }{2} \left( \ud{m}{0} \right)^2 , \label{eq_dhtm_tx} \\
\ud{m}{0} &= \fd_{\tau} \dthe{m}{0} - \frac{ \sum_{k=1}^K \left( \fd_{\tau} \dthe{m}{k} \right) \cos \dthe{m}{k} \Delta s }{ \sum_{k=1}^K \cos \dthe{m}{k} \Delta s }. \label{eq_dhtm_tu} 
\end{align}
\end{subequations}
For~\cref{eq_dhtm_tu},
one might think that a simpler definition 
$\ud{m}{0} = \fd_{\tau} \dthe{m}{0}$ would suffice,
and~\cref{eq_dhtm_tu} is an unnecessary complication.
But it is necessary to replicate
the implicit constraint $ \int^L_0 u(t,x) \rd x = 0 $  of the SP equation. 
Recall the discussion in~\cref{sec_implicit}; we here copy it
in the discrete setting.
For the numerical solution, $ \xd{m}{k} , \ud{m}{k} $, we see that 
(with an abbreviation $ a_k :=  \delta \mathcal{H}_{\rd} / \delta ( \dthe{m+1}{}, \dthe{m}{} )_k  $)
\begin{align*}
\sum_{k=1}^K \ud{m}{k} \left( \xd{m}{k} - \xd{m}{k-1} \right)
&= \sum_{k=1}^K \left( \ud{m}{0} + \sum_{i=1}^k \ba_s a_i \Delta s \right) \cos \dthe{m}{k} \Delta s \\
&= \sum_{k=1}^K \left( \ud{m}{0} + \sum_{i=0}^{k-1} \fa_s a_i \Delta s \right) \cos \dthe{m}{k} \Delta s \\
&= \sum_{k=1}^K \left( \ud{m}{0} + \sum_{i=0}^{k-1} \fd_s \fd_{\tau} \dthe{m}{i} \Delta s \right) \cos \dthe{m}{k} \Delta s \\
&= \sum_{k=1}^K \left( \ud{m}{0} + \fd_{\tau} \dthe{m}{k} - \fd_{\tau} \dthe{m}{0} \right) \cos \dthe{m}{k} \Delta s \\
&= \left( \ud{m}{0} - \fd_{\tau} \dthe{m}{0} \right) \sum_{k=1}^K \cos \dthe{m}{k} \Delta s + \sum_{k=1}^K \cos \dthe{m}{k}  \fd_{\tau} \dthe{m}{k} \Delta s.
\end{align*}
It successfully vanishes under \cref{eq_dhtm_tu}.
This is the desired discrete constraint.

Note that \cref{eq_dhtm_tx} (or its continuous counterpart~\cref{abp})
implies that the base point moves along time.
This might seem strange at first sight, but this is necessary;
the whole framework becomes consistent only under this situation.

\begin{remark} \label{rem:R}
Let us here summarize the intrinsic difference between
the $\R$ and the periodic cases.
The vanishing condition on $\R$ can be approximated by 
setting a sufficiently large window $[0,L]$, and by imposing
Dirichlet boundary condition
 $ u ( t, 0 ) = u (t,L) = u_x (t,0) = u_x (t,L) = 0$
(actually this is often employed in the integrable systems studies).
The corresponding boundary condition for the SG equation is
simply the Dirichlet boundary condition $ \theta ( \tau, 0 ) = \theta ( \tau, S) = 0 $ (or $= 2n\pi$ for loop solitons). 
This greatly simplifies the discussion carefully carried out
in the present paper; 
the condition~\cref{abp} is trivially satisfied
with $x(\tau,0)=0$, $u(\tau,0)=0$ (i.e., in this case
\emph{the base point does not move}),
and accordingly, we  do not need \cref{eq_dhtm_t}.
We also note that the assumption on the discrete boundary condition of the discrete conservation law (\cref{propdvdm}) holds, 
and the counterpart of \cref{thm_pbd} can be similarly proved. 
In this way, the results in this paper trivially applies
also to the $\R$ case.

Another note should go to the fact that,
if we employ the simple discrete hodograph transformation~\cref{eq_dht} instead of \cref{eq_dhtm}, 
we can prove the boundedness of the solution curve~\cite{OgumaMT}. 
On the other hand, the discrete boundary condition corresponding to $ u (t,L ) = 0 $ no longer exactly holds, i.e., the counterpart of \cref{thm_pbd} ceases to work. 
\end{remark}

\section{Numerical examples}
\label{sec_ne}

We here also introduce two standard schemes
to compare with the proposed scheme.
A norm-preserving scheme and
the multi-symplectic scheme~\cite{PK2015}.
All the schemes are nonlinearly implicit, but
through preliminary numerical tests we concluded that 
explicit methods are not adequate for the SP equation, which is
tougher than usual PDEs.
We solved nonlinear equations simply by `fsolve' of MATLAB 2013R;
in this paper we do not step into fast implementation of nonlinear solvers,
and leave it to future works.

\subsection{Structure-preserving schemes on fixed uniform mesh}
\label{subsec_sp_normal}

First, we show a norm-preserving scheme for the SP equation on the fixed uniform mesh, 
which has been already appeared in Introduction (\cref{fig_SPDVDM}). 

On the fixed mesh, the symbol $ \bud{m}{k} $ denotes the approximation of $ u(m \Delta t , k \Delta x )  $ for $m=0,1,\dots,M $ and $ k \in \mathbb{Z}$, 
and we assume the periodic boundary condition $ \bud{m}{k+N} = \bud{m}{k} $ ($N$ is a fixed integer satisfying $ N \Delta x = L $).  
As a discrete counterpart of the norm~$ \mathcal{I} $, we define 
\begin{equation*}
\mathcal{I}_{\rd} ( \bud{m}{k} ) := \frac{1}{2} \sum_{k=1}^N \left( \bud{m}{k} \right)^2 \Delta x. 
\end{equation*}
Then, one of the discrete variational derivative can be defined as
\begin{equation*}
\frac{\delta \mathcal{I}_{\rd}}{\delta \left( \bud{m+1}{}, \bud{m}{} \right)_k} := \fa_t \bud{m}{k}.
\end{equation*}
In order to maintain the skew-symmetry of the operator, 
the norm-preserving scheme can be constructed as 
\begin{equation}
\fd_t \bud{m}{k} = \left( \acd + \left( \cd_x \fa_t \bud{m}{k} \right) \acd \left( \cd_x \fa_t \bud{m}{k} \right) \right) \frac{\delta \mathcal{I}_{\rd}}{\delta \left( \bud{m+1}{}, \bud{m}{} \right)_k}, 
\label{eq_std_dvdm}
\end{equation}
where the discrete counterpart $ \acd $ of antiderivative $ \partial_x^{-1} $ is defined as 
\begin{align*}
\acd \bud{m}{k} &:= \tacd \bud{m}{k} - \frac{1}{N \Delta x } \sum_{\ell=1}^{N} \tacd \bud{m}{\ell} \Delta x, \\
\tacd \bud{m}{k} &:= \begin{cases} \frac{\bud{m}{N} + \bud{m}{1}}{2} \Delta x \quad & ( k = 1), \\ \left( \frac{1}{2} \bud{m}{N} + \sum_{\ell= 1}^{k-1} \bud{m}{\ell} \Delta x + \frac{1}{2} \bud{m}{k} \right) \Delta x \quad & (\text{otherwise}) \end{cases}
\end{align*}
(it is devised by Yaguchi--Matsuo--Sugihara~\cite{YMS2010} for the Ostrovsky equation). 
By using the relation $ \fd_x \acd = \acd \fd_x = \fa_x $, which can be proved in a straight forward manner, 
the numerical scheme~\cref{eq_std_dvdm} can be transformed into 
\begin{equation}
\fd_x \fd_t \bud{m}{k} = \fa_x \fa_t \bud{m}{k} + \fd_x \left( \left( \cd_x \fa_t \bud{m}{k} \right) \fa_x \frac{ \left( \fa_t \bud{m}{k} \right) \left( \fa_t \bud{m}{k-1} \right) }{2} \right)
\label{eq_std_dvdm2}
\end{equation}

In order to prove the discrete conservation law, 
the skew-symmetry of the discrete antiderivative $ \acd $ is important. 

\begin{lemma}[\protect{\cite[Lemma~6]{YMS2010}}]
For any $N$-periodic sequences $v,w$ such that satisfy $ \sum_k v_k =\sum_k w_k =0 $, it holds that 
$ \sum_{k=1}^N v_k \acd w_k  + \sum_{k=1}^N w_k \acd v_k  = 0$. 
In particular, $ \sum_{k=1}^{N} v_k \acd v_k  = 0 $
holds. 
\end{lemma}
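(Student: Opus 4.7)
The plan is to argue by direct manipulation of the defining sums, exploiting the zero-mean hypothesis in two essential places. The first observation is that $\acd$ differs from $\tacd$ only by a constant independent of the index $k$; since $\sum_k v_k = \sum_k w_k = 0$, these constants contribute nothing to the pairings, so
\[ \sum_{k=1}^N v_k \acd w_k = \sum_{k=1}^N v_k \tacd w_k, \qquad \sum_{k=1}^N w_k \acd v_k = \sum_{k=1}^N w_k \tacd v_k, \]
and the task reduces to showing $\sum_k v_k \tacd w_k + \sum_k w_k \tacd v_k = 0$.

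I would then rewrite $\tacd$ in the compact form $\tacd u_k = \Delta x \sum_{\ell=1}^k \ba_x u_\ell$, which absorbs the trapezoidal endpoint weights into the backward average $\ba_x u_\ell = (u_{\ell-1}+u_\ell)/2$ under the $N$-periodic convention $u_0 = u_N$. Interchanging the order of summation and using $\sum_k v_k = 0$ once more to convert the tail sum $\sum_{k=\ell}^N v_k$ into $-\sum_{k=1}^{\ell-1} v_k =: -V_\ell$, one obtains
\[ \sum_{k=1}^N v_k \tacd w_k = -\Delta x \sum_{\ell=1}^N (\ba_x w_\ell)\, V_\ell, \]
and an analogous formula for $\sum_k w_k \tacd v_k$ with the roles of $v$ and $w$ exchanged (and $W_\ell := \sum_{k=1}^{\ell-1} w_k$).

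The final step is a summation by parts on the $N$-periodic lattice. Expanding $\ba_x w_\ell = (w_{\ell-1}+w_\ell)/2$ and shifting the index in the $w_{\ell-1}$ piece (using periodicity of $w$ together with $V_1 = 0$ and $V_N = -v_N$, which in turn follows from $\sum v_k = 0$), each half-weighted sum rearranges into
\[ \sum_{\ell=1}^N (\ba_x w_\ell)\, V_\ell = \sum_{\ell=1}^N w_\ell V_\ell + \tfrac{1}{2}\sum_{\ell=1}^N v_\ell w_\ell, \]
and similarly for the symmetric counterpart. Adding the two pairings, the diagonal halves combine into $\sum_\ell v_\ell w_\ell$, while the non-diagonal part equals $\sum_\ell w_\ell V_\ell + \sum_\ell v_\ell W_\ell = \sum_{k\neq\ell} v_k w_\ell = (\sum v_k)(\sum w_\ell) - \sum v_k w_k = -\sum v_k w_k$ by the zero-mean hypothesis. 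These cancel exactly, giving the claim, and the particular case $v=w$ follows by substitution and division by two. The delicate step is the bookkeeping: the trapezoidal half-weights in $\tacd$ and the offset term in the definition of $\acd$ interact non-trivially with periodicity at the endpoints $\ell=1$ and $\ell=N$, and the zero-mean hypothesis is precisely what makes the residual boundary contributions cancel.
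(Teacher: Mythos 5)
Your proof is correct, and the first thing to say is that there is no internal proof to compare it against: the paper imports this lemma verbatim from \cite[Lemma~6]{YMS2010} and omits the argument entirely, so your write-up supplies what the paper leaves to the citation (the cited reference establishes the skew-symmetry of $\acd$ on the zero-mean subspace; your version is a direct, self-contained summation argument). Every step checks out. The reduction from $\acd$ to $\tacd$ is legitimate because the correction term in the definition of $\acd$ is a constant independent of $k$, hence pairs to zero against any zero-mean sequence. Your compact form $\tacd u_k = \Delta x \sum_{\ell=1}^{k} \ba_x u_\ell$ with the convention $u_0 := u_N$ agrees with the paper's trapezoidal definition, since $\sum_{\ell=1}^{k} (u_{\ell-1}+u_\ell)/2 = \tfrac{1}{2}u_N + \sum_{\ell=1}^{k-1} u_\ell + \tfrac{1}{2}u_k$ (the displayed definition of $\tacd$ in the paper carries an evident typo --- a doubled $\Delta x$ --- which you have silently and correctly repaired). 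The two bookkeeping identities you invoke are also right: with $V_\ell := \sum_{k=1}^{\ell-1} v_k$, the index shift together with $V_1 = 0$, $V_{\ell+1} = V_\ell + v_\ell$, and $V_N = -v_N$ yields $\sum_{\ell=1}^{N} w_{\ell-1} V_\ell = \sum_{\ell=1}^{N} w_\ell V_\ell + \sum_{\ell=1}^{N} v_\ell w_\ell$, from which your half-weight identity follows by averaging the two pieces of $\ba_x w_\ell$; and the off-diagonal double count $\sum_{\ell} w_\ell V_\ell + \sum_{\ell} v_\ell W_\ell = \sum_{k \neq \ell} v_k w_\ell = \left( \sum_k v_k \right)\left( \sum_\ell w_\ell \right) - \sum_k v_k w_k = -\sum_k v_k w_k$ cancels the diagonal contribution $\sum_\ell v_\ell w_\ell$ exactly, after which the factor $-\Delta x$ is irrelevant. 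The specialization $v = w$ gives $2\sum_k v_k \acd v_k = 0$, as you say. A final remark in your favor: you use the zero-mean hypothesis exactly three times --- to annihilate the offset constant, to convert the tail sums $\sum_{k=\ell}^{N} v_k$ into $-V_\ell$, and in the closing double count --- and each use is genuinely necessary; the lemma is false without that hypothesis, so this is precisely the right accounting.
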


By using the skew-symmetry of $ \acd $ above, 
we can prove the discrete conservation law as follows: 

\begin{proposition}
Let $ \bud{m}{k} $ be the numerical solution of the scheme~\cref{eq_std_dvdm} under the 
periodic boundary condition and initial condition satisfying $ \sum_k \bud{0}{k} = 0$. 
Then, $ \mathcal{I}_{\rd} ( \bud{m+1}{} ) = \mathcal{I}_{\rd} ( \bud{m}{} ) $ holds. 
\end{proposition}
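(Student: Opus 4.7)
The plan mirrors the continuous Hamiltonian calculation. In the continuous setting, $(\mathrm{d}/\mathrm{d}t)\mathcal{I}(u)$ is computed by pairing $u_t$ with $\delta \mathcal{I}/\delta u = u$, substituting the equation $u_t = (\partial_x^{-1}+u_x\partial_x^{-1}u_x)(\delta \mathcal{I}/\delta u)$, and then invoking the skew-symmetry of both operator pieces. The discrete proof replays this verbatim with the discrete variational derivative playing the role of $\delta \mathcal{I}/\delta u$, and with the cited lemma (skew-symmetry of $\acd$) replacing skew-symmetry of $\partial_x^{-1}$.

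First I would establish the discrete chain rule identity
\begin{equation*}
\fd_t \mathcal{I}_{\rd}(\bud{m}{}) = \sum_{k=1}^N \frac{\delta \mathcal{I}_{\rd}}{\delta(\bud{m+1}{},\bud{m}{})_k}\, \fd_t \bud{m}{k} \, \Delta x,
\end{equation*}
which follows immediately from $(\fa_t \bud{m}{k})(\fd_t \bud{m}{k}) = \fd_t (\tfrac{1}{2}(\bud{m}{k})^2)$ together with the definition of the discrete variational derivative. Substituting the scheme \cref{eq_std_dvdm} for $\fd_t \bud{m}{k}$ converts the right-hand side into a quadratic form in $v_k := \delta \mathcal{I}_{\rd}/\delta(\bud{m+1}{},\bud{m}{})_k = \fa_t \bud{m}{k}$, namely
\begin{equation*}
\Delta x \sum_{k=1}^N v_k \bigl(\acd v_k\bigr) + \Delta x \sum_{k=1}^N v_k \, (\cd_x \fa_t \bud{m}{k})\, \acd\bigl((\cd_x \fa_t \bud{m}{k})\, v_k\bigr).
\end{equation*}
The first sum is exactly the form $\sum v_k \acd v_k$, and the second is $\sum w_k \acd w_k$ with $w_k := (\cd_x \fa_t \bud{m}{k}) v_k$. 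By the cited skew-symmetry lemma, both vanish provided $\sum_k v_k = 0$ and $\sum_k w_k = 0$.

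The main technical step, and really the only obstacle, is verifying these two zero-mean hypotheses. For $\sum_k v_k = 0$: summing the reformulated scheme \cref{eq_std_dvdm2} over a period and using that $\sum_k \fd_x (\,\cdot\,) = 0$ and $\sum_k \fa_x(\,\cdot\,) = \sum_k(\,\cdot\,)$ under periodicity, one obtains $\sum_k \fa_t \bud{m}{k} = 0$, which is precisely $\sum_k v_k = 0$. (As a byproduct this shows $\sum_k \bud{m+1}{k} = -\sum_k \bud{m}{k}$, so the assumption $\sum_k \bud{0}{k}=0$ propagates as $\sum_k \bud{m}{k}=0$ for all $m$, which is the discrete analogue of the implicit constraint $\int u\,\mathrm{d}x = 0$ for the SP equation.) For $\sum_k w_k = 0$: $w_k = v_k \cd_x v_k$, and a direct index shift on the periodic lattice gives
\begin{equation*}
\sum_{k=1}^N v_k \cd_x v_k = \frac{1}{2\Delta x}\sum_{k=1}^N \bigl(v_k v_{k+1} - v_k v_{k-1}\bigr) = 0.
\end{equation*}

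With both hypotheses in place the lemma yields $\fd_t \mathcal{I}_{\rd}(\bud{m}{}) = 0$, hence $\mathcal{I}_{\rd}(\bud{m+1}{}) = \mathcal{I}_{\rd}(\bud{m}{})$, which is the claim. The only subtle point throughout is the bookkeeping that ensures the zero-mean conditions needed for the skew-symmetry lemma; once those are secured, the argument is a direct discrete mirror of the continuous bi-Hamiltonian calculation.
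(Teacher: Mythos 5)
Your proposal is correct and takes essentially the same route as the paper's proof: the discrete chain rule, substitution of the scheme to get the two quadratic forms $\sum_k v_k \acd v_k$ and $\sum_k w_k \acd w_k$ with $w_k = v_k\,\cd_x v_k$, verification of the zero-mean hypotheses by summing~\cref{eq_std_dvdm2} over a period and by the skew-symmetry of $\cd_x$, and then the cited lemma. The only cosmetic difference is that the paper packages the first zero-mean condition as the inductive claim that $\sum_k \bud{m}{k}=0$ propagates, which is the same summation you perform directly.
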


\begin{proof}
We introduce the notation $ a_k = \delta \mathcal{I}_{\rd} / \delta ( \bud{m+1}{} , \bud{m}{} )_k $ for brevity. 
From the definition of the discrete variational derivative, we see 
\begin{align*}
\fd_t \mathcal{I}_{\rd} \left( \bud{m}{} \right)
&= \sum_{k=1}^N a_k \fd_t \bud{m}{k} \Delta x \\
&= \sum_{k=1}^N a_k \left( \acd + \left( \cd_x \fa_t \bud{m}{k} \right) \acd \left( \cd_x \fa_t \bud{m}{k} \right) \right) a_k \Delta x.
\end{align*}
Hence, in order to prove the proposition, it is sufficient to prove 
(i) $ \sum_k a_k = 0 $ and (ii) $ \sum_k a_k \cd_x \fa_t \bud{m}{k} =0 $. 

The condition (i) can be shown by proving the following claim: 
if $ \sum_k \bud{m}{k} = 0 $ holds, then $ \sum_k \bud{m+1}{k} = 0 $ holds. 
Since the numerical solution $ \bud{m}{k} $ of the numerical scheme~\cref{eq_std_dvdm} 
satisfies~\cref{eq_std_dvdm2}, by summing up the both sides of~\cref{eq_std_dvdm2} for $k =1,\dots, N$, we see
$
\sum_{k=1}^N \bud{m+1}{k} = -\sum_{k=1}^N \bud{m}{k}, 
$
which proves the claim. 

Finally, we prove the condition (ii): 
\begin{align*}
\sum_{k=1}^N a_k \cd_x \fa_t \bud{m}{k} &= \sum_{k=1}^N \left( \fa_t \bud{m}{k} \right) \cd_x \left( \fa_t \bud{m}{k} \right) = 0,
\end{align*}
where the last equality holds by the skew-symmetry of the central difference operator~$ \cd_x $. 
\end{proof}


Next we show the multi-symplectic scheme given in Pietrzyk--Kanatt\v{s}ikov~\cite{PK2015}.
They found a multi-symplectic form of the SP equation, and applied 
the Preissman-box scheme~\cite{BR2001} to it.
By eliminating redundant variables in Pietrzyk--Kanatt\v{s}ikov's 
scheme, we find
\[ \fd_x \fd_t \left( \fa_x \right)^2 \bud{m}{k} = \fa_t \left( \fa_x \right)^3 \bud{m}{k} + \frac{1}{6} \left( \fd_x \right)^2 \left( \fa_t \fa_x \bud{m}{k} \right)^3. \]
By further rewriting $ \bud{m}{k} = \fa_x \bud{m}{k} $, 
the scheme above can be simplified into 
\begin{equation} \label{scheme_ms}
\cd_x \fd_t \bud{m}{k}  = \fa_t \caa_x \bud{m}{k} + \frac{1}{6} \cdd_x \left( \fa_t \bud{m}{k} \right)^3,
\end{equation}
where $ \cdd_x \bud{m}{k} := ( \bud{m}{k+1} - 2 \bud{m}{k} + \bud{m}{k}  )/ ( \Delta x )^2  $ and 
$ \caa_x \bud{m}{k} := ( \bud{m}{k+1} + 2 \bud{m}{k} + \bud{m}{k}  )/ 4 $. 
It should be noted that the solutions of this numerical scheme almost preserve $ \mathcal{I} $ and $ \mathcal{E} $
~\cite{MR2003}. 
For the general theory of  multi-symplectic integrators, see
Bridges--Reich~\cite{BR2001}.

\subsection{Numerical examples for smooth pulse solution}
\label{subsec_ne_breather}
 
First of all, we consider the pulse
solution~\cref{eq_spe_breather} with $ \xi = 0.3$.
\cref{fig_pulse30_SPDVDM_511,fig_pulse30_SPMS_511,fig_pulse30_SPSGDVDM_511,fig_pulse30_norm} show the results of
the norm-preserving scheme~\cref{eq_std_dvdm},
the multi-symplectic scheme~\cref{scheme_ms},
and the proposed method.
The parameters are taken to
$ K = 511, \ \Delta t = 0.01 , \ S = 70, \ L = 66.96$.
The initial data $\dthe{0}{}$ is prepared by
the finite difference approximation~\cref{fd_approx},
which seems fine judging from the results below.
The norm-preserving scheme is unstable, as seen in Introduction.
This shows that being structure-preserving in some sense is
not necessarily sufficient for difficult problems.
The other two schemes work fine for the setting.
This observation is consistent with Pietrzyk--Kanatt\v{s}ikov~\cite{PK2015}, 
which concluded that the multi-symplectic scheme is better than the standard method in case of $ \xi = 0.2 $ in view of the accuracy. 
The results for severer case $ \xi = 0.38 $
are shown in 
\cref{fig_SPDVDM_511,fig_SPMS_511,fig_SPSGDVDM_511,fig_pulse38_norm} ($ K = 511, \ \Delta t = 0.01 , \  S = 70, \ L = 67.6$).
The multi-symplectic scheme cannot withstand anymore,
and exhibit oscillations.
The proposed method keeps working, which shows the superiority of our method.
In fact, the method happily works with 
much coarser spatial and temporal meshes
for a longer time interval; see~\cref{fig_pulse38_long_SPSGDVDM}.
Note that although the proposed method does not directly aim at
the pulse (solitary wave) solution, as opposed to the integrable 
systems studies, it successfully captures the pulse
solution even at $t=1000$ (the pulse goes round the spatial interval
more than 10 times.)
\cref{fig_pulse30_norm} and \cref{fig_pulse38_norm} show that the
preservation of the invariants is consistent with the
construction of the schemes.
There, ``NP'' means the norm-preserving scheme,
``MS'' the multi-symplectic scheme,
and ``SG'' the proposed scheme.

\begin{figure}[htp]
\begin{minipage}{0.48\textwidth}
\centering
\pgfplotsset{width=6cm,compat=newest}
\begin{tikzpicture}
\begin{axis}[
		xlabel=$x$, 
		ylabel=$t$, 
		zlabel=$u$, 
		view={30}{45},
		enlarge x limits=false]
\addplot3[black] table {stdDVDM30pulse0.dat};
\addplot3[black] table {stdDVDM30pulse2.dat};
\addplot3[black] table {stdDVDM30pulse4.dat};
\addplot3[black] table {stdDVDM30pulse6.dat};
\addplot3[black] table {stdDVDM30pulse8.dat};
\addplot3[black] table {stdDVDM30pulse10.dat};
\addplot3[black] table {stdDVDM30pulse12.dat};
\addplot3[black] table {stdDVDM30pulse14.dat};
\addplot3[black] table {stdDVDM30pulse16.dat};
\addplot3[black] table {stdDVDM30pulse18.dat};
\addplot3[black] table {stdDVDM30pulse20.dat};
\end{axis}
\end{tikzpicture}
\caption{The numerical solution of the norm-preserving scheme~\cref{eq_std_dvdm} corresponding to the pulse solution~\cref{eq_spe_breather} with $ \xi = 0.30 $ $( K = 511 $, $ \Delta t = 0.01 )$.}
\label{fig_pulse30_SPDVDM_511}
\end{minipage}
\hspace{3pt}
\begin{minipage}{0.48\textwidth}
\centering
\pgfplotsset{width=6cm,compat=newest}
\begin{tikzpicture}
\begin{axis}[
		xlabel=$x$, 
		ylabel=$t$, 
		zlabel=$u$, 
		view={30}{45},
		enlarge x limits=false]
\addplot3[black] table {stdMS30pulse0.dat};
\addplot3[black] table {stdMS30pulse2.dat};
\addplot3[black] table {stdMS30pulse4.dat};
\addplot3[black] table {stdMS30pulse6.dat};
\addplot3[black] table {stdMS30pulse8.dat};
\addplot3[black] table {stdMS30pulse10.dat};
\addplot3[black] table {stdMS30pulse12.dat};
\addplot3[black] table {stdMS30pulse14.dat};
\addplot3[black] table {stdMS30pulse16.dat};
\addplot3[black] table {stdMS30pulse18.dat};
\addplot3[black] table {stdMS30pulse20.dat};
\end{axis}
\end{tikzpicture}
\caption{The numerical solution of the multi-symplectic scheme corresponding to the pulse solution~\cref{eq_spe_breather} with $ \xi = 0.30 $ $( K = 511 $, $ \Delta t = 0.01 )$.}
\label{fig_pulse30_SPMS_511}
\end{minipage}

\begin{minipage}{0.48\textwidth}
\centering
\pgfplotsset{width=6cm,compat=newest}
\begin{tikzpicture}
\begin{axis}[
		xlabel=$x$, 
		ylabel=$t$, 
		zlabel=$u$, 
		view={30}{45},
		enlarge x limits=false]
\addplot3[black] table {SGDVDM30pulse0.dat};
\addplot3[black] table {SGDVDM30pulse2.dat};
\addplot3[black] table {SGDVDM30pulse4.dat};
\addplot3[black] table {SGDVDM30pulse6.dat};
\addplot3[black] table {SGDVDM30pulse8.dat};
\addplot3[black] table {SGDVDM30pulse10.dat};
\addplot3[black] table {SGDVDM30pulse12.dat};
\addplot3[black] table {SGDVDM30pulse14.dat};
\addplot3[black] table {SGDVDM30pulse16.dat};
\addplot3[black] table {SGDVDM30pulse18.dat};
\addplot3[black] table {SGDVDM30pulse20.dat};
\end{axis}
\end{tikzpicture}
\caption{The numerical solution of the proposed method under the initial condition~\cref{eq_spe_breather} with $ \xi = 0.30 $ $( K = 511 $, $ \Delta t = 0.01 )$.}
\label{fig_pulse30_SPSGDVDM_511}
\end{minipage}
\begin{minipage}{0.48\textwidth}
\begin{tikzpicture}
\begin{axis}[width=5.5cm,compat = newest,xlabel={$t$},enlarge x limits=false, 
legend style={at={(0.5,-0.15)},anchor=north,legend columns=2},
ylabel ={$ \log $(relative error)}]
\addplot[smooth,green] table {stdDVDM30pulseLogenerrordown.dat};
\addlegendentry{\footnotesize NP: energy}
\addplot[densely dotted,green] table {stdDVDM30pulseLognormerrordown.dat};
\addlegendentry{\footnotesize NP: norm}
\addplot[smooth,blue] table {stdMS30pulseLogenerrordown.dat};
\addlegendentry{\footnotesize MS: energy}
\addplot[densely dotted,blue] table {stdMS30pulseLognormerrordown.dat};
\addlegendentry{\footnotesize MS: norm}
\addplot[smooth,red] table {SGDVDM30pulseLogenerrordown.dat};
\addlegendentry{\footnotesize SG: energy}
\addplot[densely dotted,red] table {SGDVDM30pulseLognormerrordown.dat};
\addlegendentry{\footnotesize SG: norm}
\end{axis}
\end{tikzpicture}
\caption{The evolution of the logarithm of the relative errors of the norm $ \mathcal{I} $ and the energy $ \mathcal{E} $ for each numerical solutions.}
\label{fig_pulse30_norm}
\end{minipage}
\end{figure}

\begin{figure}[htp]
\begin{minipage}{0.48\textwidth}
\centering
\pgfplotsset{width=6cm,compat=newest}
\begin{tikzpicture}
\begin{axis}[
		xlabel=$x$, 
		ylabel=$t$, 
		zlabel=$u$, 
		view={10}{30},
		enlarge x limits=false]
\addplot3[black] table {stdDVDM38pulse0.dat};
\addplot3[black] table {stdDVDM38pulse2.dat};
\addplot3[black] table {stdDVDM38pulse4.dat};
\addplot3[black] table {stdDVDM38pulse6.dat};
\addplot3[black] table {stdDVDM38pulse8.dat};
\addplot3[black] table {stdDVDM38pulse10.dat};
\addplot3[black] table {stdDVDM38pulse12.dat};
\addplot3[black] table {stdDVDM38pulse14.dat};
\addplot3[black] table {stdDVDM38pulse16.dat};
\addplot3[black] table {stdDVDM38pulse18.dat};
\addplot3[black] table {stdDVDM38pulse20.dat};
\end{axis}
\end{tikzpicture}
\caption{The numerical solution of the norm-preserving scheme~\cref{eq_std_dvdm} corresponding to the pulse solution~\cref{eq_spe_breather} with $ \xi = 0.38 $ $( K = 511 $, $ \Delta t = 0.01 )$.}
\label{fig_SPDVDM_511}
\end{minipage}
\hspace{3pt}
\begin{minipage}{0.48\textwidth}
\centering
\pgfplotsset{width=6cm,compat=newest}
\begin{tikzpicture}
\begin{axis}[
		xlabel=$x$, 
		ylabel=$t$, 
		zlabel=$u$, 
		view={10}{30},
		enlarge x limits=false]
\addplot3[black] table {stdMS38pulse0.dat};
\addplot3[black] table {stdMS38pulse2.dat};
\addplot3[black] table {stdMS38pulse4.dat};
\addplot3[black] table {stdMS38pulse6.dat};
\addplot3[black] table {stdMS38pulse8.dat};
\addplot3[black] table {stdMS38pulse10.dat};
\addplot3[black] table {stdMS38pulse12.dat};
\addplot3[black] table {stdMS38pulse14.dat};
\addplot3[black] table {stdMS38pulse16.dat};
\addplot3[black] table {stdMS38pulse18.dat};
\addplot3[black] table {stdMS38pulse20.dat};
\end{axis}
\end{tikzpicture}
\caption{The numerical solution of the multi-symplectic scheme corresponding to the pulse solution~\cref{eq_spe_breather} with $ \xi = 0.38 $ $( K = 511 $, $ \Delta t = 0.01 )$.}
\label{fig_SPMS_511}
\end{minipage}

\begin{minipage}{0.48\textwidth}
\centering
\pgfplotsset{width=6cm,compat=newest}
\begin{tikzpicture}
\begin{axis}[
		xlabel=$x$, 
		ylabel=$t$, 
		zlabel=$u$, 
		view={10}{30},
		enlarge x limits=false]
\addplot3[black] table {SGDVDM38pulse0.dat};
\addplot3[black] table {SGDVDM38pulse2.dat};
\addplot3[black] table {SGDVDM38pulse4.dat};
\addplot3[black] table {SGDVDM38pulse6.dat};
\addplot3[black] table {SGDVDM38pulse8.dat};
\addplot3[black] table {SGDVDM38pulse10.dat};
\addplot3[black] table {SGDVDM38pulse12.dat};
\addplot3[black] table {SGDVDM38pulse14.dat};
\addplot3[black] table {SGDVDM38pulse16.dat};
\addplot3[black] table {SGDVDM38pulse18.dat};
\addplot3[black] table {SGDVDM38pulse20.dat};
\end{axis}
\end{tikzpicture}
\caption{The numerical solution of the proposed method under the initial condition~\cref{eq_spe_breather} with $ \xi = 0.38 $ $( K = 511 $, $ \Delta t = 0.01 )$.}
\label{fig_SPSGDVDM_511}
\end{minipage}
\begin{minipage}{0.48\textwidth}
\begin{tikzpicture}
\begin{axis}[width=5.5cm,compat = newest,xlabel={$t$},enlarge x limits=false, 
legend style={at={(0.5,-0.15)},anchor=north,legend columns=3},
ylabel ={$ \log $(error of the norm)}]
\addplot[black,densely dashed] table {stdDVDM38pulseLogenerror.dat};
\addlegendentry{NP}
\addplot[black,smooth] table {stdMS38pulseLogenerror.dat};
\addlegendentry{MS}
\addplot[black,densely dotted] table {SGDVDM38pulseLogenerror.dat};
\addlegendentry{SG}
\end{axis}
\end{tikzpicture}
\caption{The evolution of the logarithm of the relative error of the norm $ \mathcal{I}_{\rd} $ for each numerical solutions.}
\label{fig_pulse38_norm}
\end{minipage}
\end{figure}

\begin{figure}
\begin{minipage}{0.48\textwidth}
\centering
\pgfplotsset{width=6cm,compat=newest}
\begin{tikzpicture}
\begin{axis}[
		xlabel=$x$, 
		ylabel=$t$, 
		zlabel=$u$, 
		view={30}{45},
		enlarge x limits=false]
\addplot3[black] table {pulse38spSPSGDVDM0.dat};
\addplot3[black] table {pulse38spSPSGDVDM1.dat};
\addplot3[black] table {pulse38spSPSGDVDM2.dat};
\addplot3[black] table {pulse38spSPSGDVDM3.dat};
\addplot3[black] table {pulse38spSPSGDVDM4.dat};
\addplot3[black] table {pulse38spSPSGDVDM5.dat};
\addplot3[black] table {pulse38spSPSGDVDM6.dat};
\addplot3[black] table {pulse38spSPSGDVDM7.dat};
\addplot3[black] table {pulse38spSPSGDVDM8.dat};
\addplot3[black] table {pulse38spSPSGDVDM9.dat};
\addplot3[black] table {pulse38spSPSGDVDM10.dat};
\addplot3[black] table {pulse38spSPSGDVDM11.dat};
\addplot3[black] table {pulse38spSPSGDVDM12.dat};
\addplot3[black] table {pulse38spSPSGDVDM13.dat};
\addplot3[black] table {pulse38spSPSGDVDM14.dat};
\addplot3[black] table {pulse38spSPSGDVDM15.dat};
\addplot3[black] table {pulse38spSPSGDVDM16.dat};
\addplot3[black] table {pulse38spSPSGDVDM17.dat};
\addplot3[black] table {pulse38spSPSGDVDM18.dat};
\addplot3[black] table {pulse38spSPSGDVDM19.dat};
\addplot3[black] table {pulse38spSPSGDVDM20.dat};
\end{axis}
\end{tikzpicture}
\end{minipage}
\begin{minipage}{0.48\textwidth}
\pgfplotsset{width=5.5cm}
\centering
\begin{tikzpicture}
\begin{axis}[compat = newest,xlabel={$x$},ylabel={$u$},enlarge x limits=false]
\addplot[black,smooth] table {pulse38spSPSGDVDMt1000.dat};
\end{axis}
\end{tikzpicture}
\end{minipage}
\caption{The numerical solution of the proposed method corresponding to the pulse solution~\cref{eq_spe_breather} with $ \xi = 0.38 $ $( K = 117 $, $ \Delta t = 0.1 )${\em :} the left panel shows the propagation of the numerical solution until $ t = 60$, the right panel shows the numerical solution at $ t = 1000 $.}
\label{fig_pulse38_long_SPSGDVDM}
\end{figure}
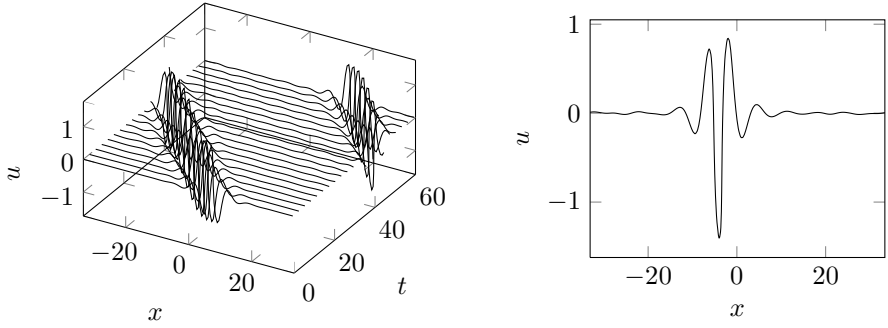

\subsection{Numerical examples for the loop and anti-loop solution}
\label{ne_circle_2loop}

In what follows, let us consider exotic solutions,
which are more difficult to treat numerically.
Recall in \cref{rem_standard}
a standard DVDM scheme~\cref{eq_sg_dvdm_cd} 
using the central difference operator was introduced.
In our preliminary numerical test, we confirmed that it 
beautifully reproduced the smooth pulse solution shown
in the previous section (omitted here.)
However, it ceases to work for more difficult cases,
and in such a situation, the use of
the average-difference method~\cref{eq_sg_dvdm} is indispensable.
Let us consider the following loop and anti-loop solution
\begin{equation}
\begin{cases}
{\displaystyle u ( \tau,s ) = 4 \xi \zeta \frac{ \xi \sinh \psi \sinh \phi  + \zeta \cosh \psi \cosh \phi }{ \xi^2 \sinh^2 \psi + \zeta^2 \cosh^2 \phi }, }\\[10pt]
{\displaystyle x(\tau ,s) = s + 2 \xi \zeta \frac{ \xi \sinh 2 \psi - \zeta \sinh 2 \phi }{ \xi^2 \sinh^2 \psi + \zeta^2 \cosh^2 \phi } },
\end{cases}
\label{eq_lals_spe}
\end{equation}
where the parameter $ \xi > 1$ and 
\[ \phi  = \xi ( s + \tau ) , \quad \psi = \zeta (s-\tau) , \quad \zeta = \sqrt{\xi^2 - 1}. \]
Again, in what follows, we employed the finite difference approximation~\cref{fd_approx} for preparing the initial data.

As shown in \cref{fig_per_lal_spe,fig_per_lal_sge}, 
the standard DVDM~\cref{eq_sg_dvdm} cannot reproduce the loop and anti-loop solution with $ \xi = 1.2 $
under the condition $ \Delta t = 0.1$, $ S = 80 $, and $ K = 257 $. 
On the other hand, even if we employ larger mesh sizes $ K = 129$, 
the proposed method reproduce the solution until $ t = 800 $ as shown in 
\cref{fig_per_lal_ADspe,fig_per_lal_ADsge}. 
The bars in \cref{fig_per_lal_ADspe} denote the grids;
we can observe that the grids become dense around the loop and 
anti-loop solutions, which reflects the moving mesh property
of the proposed method.
The bold circle ``{\large $\bullet$}''
 denotes the base point (recall~\cref{abp}
for the continuous case, and \cref{eq_dhtm_tx} and \cref{eq_dhtm_tu}
for the discrete case.)
It is observed that the base point moves as time goes by
(which is demanded in the periodic case),
and furthermore, the solution is successfully continuous around
that point.
Those notes apply also to other figures with similar bars and circles.

\begin{figure}[htbp]
\begin{minipage}{\textwidth}
\pgfplotsset{width=4cm}
\centering
\begin{minipage}{0.32\textwidth}
\centering
\begin{tikzpicture}[remember picture]
\begin{axis}[compat = newest,xlabel={$x$},ylabel={$u$},enlarge x limits=false]
\addplot[black,smooth] table[x=x,y=y] {perlalp0xy.dat};
\addplot[mark = *,only marks,black] coordinates
{(70.37,-0.00)};
\addplot[black,only marks,mark = |] table[x=x,y=c] {perlalp0xyd.dat};
\end{axis}
\end{tikzpicture}

(a) $t=0$
\end{minipage}
\begin{minipage}{0.32\textwidth}
\centering
\begin{tikzpicture}[remember picture]
\begin{axis}[compat = newest,xlabel={$x$},ylabel={$u$},enlarge x limits=false]
\addplot[black,smooth] table[x=x,y=y] {perlalp16xy.dat};
\addplot[mark = *,only marks,black] coordinates
{(63.03,0.01)};
\addplot[black,only marks,mark = |] table[x=x,y=c] {perlalp16xyd.dat};
\end{axis}
\end{tikzpicture}

(b) $t=16$
\end{minipage}
\begin{minipage}{0.32\textwidth}
\centering
\begin{tikzpicture}[remember picture]
\begin{axis}[compat = newest,xlabel={$x$},ylabel={$u$},enlarge x limits=false]
\addplot[black,smooth] table[x=x,y=y] {perlalp32xy.dat};
\addplot[mark = *,only marks,black] coordinates
{(63.78,-0.16)};
\addplot[black,only marks,mark = |] table[x=x,y=c] {perlalp32xyd.dat};
\end{axis}
\end{tikzpicture}

(c) $t=32$
\end{minipage}
\caption{ 
The loop and anti-loop soliton solution: solid lines correspond to the numerical solution 
obtained from numerical solution shown in \cref{fig_per_lal_sge} via discrete hodograph transformation~\cref{eq_dhodom} ($ \xi = 1.2, $ $ \Delta t = 0.1$, $ S = 80 $, and $ K = 257 $). 
Bars are for every four sample points. The symbol `$\bullet$' denotes the base point $ \left( \xd{m}{0} ,\ud{m}{0} \right) $.}
\label{fig_per_lal_spe}
\end{minipage}

\begin{minipage}{\textwidth}
\pgfplotsset{width=4cm}
\centering
\begin{minipage}{0.32\textwidth}
\centering
\begin{tikzpicture}[remember picture]
\begin{axis}[compat = newest,xlabel={$s$},ylabel={$\theta$},enlarge x limits=false]
\addplot[black,smooth] table {perlalp0arg.dat};
\end{axis}
\end{tikzpicture}

(a) $t=0$
\end{minipage}
\begin{minipage}{0.32\textwidth}
\centering
\begin{tikzpicture}[remember picture]
\begin{axis}[compat = newest,xlabel={$s$},ylabel={$\theta$},enlarge x limits=false]
\addplot[black,smooth] table {perlalp16arg.dat};
\end{axis}
\end{tikzpicture}

(b) $t=16$
\end{minipage}
\begin{minipage}{0.32\textwidth}
\centering
\begin{tikzpicture}[remember picture]
\begin{axis}[compat = newest,xlabel={$s$},ylabel={$\theta$},enlarge x limits=false]
\addplot[black,smooth] table {perlalp32arg.dat};
\end{axis}
\end{tikzpicture}

(c) $t=32$
\end{minipage}
\caption{ 
The loop and anti-loop soliton solution: solid lines correspond to the numerical solutions of the standard DVDM~\cref{eq_sg_dvdm_cd} for the SG equation  ($ \xi = 1.2, $ $ \Delta t = 0.1$, $ S = 80 $, and $ K = 257 $). }
\label{fig_per_lal_sge}
\end{minipage}
\end{figure}

\begin{figure}[htbp]
\begin{minipage}{\textwidth}
\pgfplotsset{width=4cm}
\centering
\begin{minipage}{0.32\textwidth}
\centering
\begin{tikzpicture}[remember picture]
\begin{axis}[compat = newest,xlabel={$x$},ylabel={$u$},enlarge x limits=false]
\addplot[black,smooth] table[x=x,y=y] {perlalpAD0xy.dat};
 \addplot[mark = *,only marks,black] coordinates
{(70.31,-0.00)};
\addplot[black,only marks,mark = |] table[x=x,y=c] {perlalpAD0xyd.dat};
\end{axis}
\end{tikzpicture}

(a) $t=0$
\end{minipage}
\begin{minipage}{0.32\textwidth}
\centering
\begin{tikzpicture}[remember picture]
\begin{axis}[compat = newest,xlabel={$x$},ylabel={$u$},enlarge x limits=false]
\addplot[black,smooth] table[x=x,y=y] {perlalpAD4xy.dat};
\addplot[mark = *,only marks,black] coordinates
{(3.91,-0.00)};
\addplot[black,only marks,mark = |] table[x=x,y=c] {perlalpAD4xyd.dat};
\end{axis}
\end{tikzpicture}

(b) $t=400$
\end{minipage}
\begin{minipage}{0.32\textwidth}
\centering
\begin{tikzpicture}[remember picture]
\begin{axis}[compat = newest,xlabel={$x$},ylabel={$u$},enlarge x limits=false]
\addplot[black,smooth] table[x=x,y=y] {perlalpAD8xy.dat};
\addplot[mark = *,only marks,black] coordinates
{(11.83,2.70)};
\addplot[black,only marks,mark = |] table[x=x,y=c] {perlalpAD8xyd.dat};
\end{axis}
\end{tikzpicture}

(c) $t=800$
\end{minipage}
\caption{ 
The loop and anti-loop soliton solution: solid lines correspond to the numerical solution 
obtained from numerical solution shown in \cref{fig_per_lal_ADsge} via the discrete hodograph transformation~\cref{eq_dhodom} ($ \xi = 1.2, $ $ \Delta t = 0.1$, $ S = 80 $, and $ K = 129 $). 
Bars are for every four sample points. The symbol `$\bullet$' denotes the base point $ \left( \xd{m}{0} ,\ud{m}{0} \right) $.}
\label{fig_per_lal_ADspe}
\end{minipage}

\begin{minipage}{\textwidth}
\pgfplotsset{width=4cm}
\centering
\begin{minipage}{0.32\textwidth}
\centering
\begin{tikzpicture}[remember picture]
\begin{axis}[compat = newest,xlabel={$s$},ylabel={$\theta$},enlarge x limits=false]
\addplot[black,smooth] table {perlalpAD0arg.dat};
\end{axis}
\end{tikzpicture}

(a) $t=0$
\end{minipage}
\begin{minipage}{0.32\textwidth}
\centering
\begin{tikzpicture}
\begin{axis}[compat = newest,xlabel={$s$},ylabel={$\theta$},enlarge x limits=false]
\addplot[black,smooth] table {perlalpAD4arg.dat};
\end{axis}
\end{tikzpicture}

(b) $t=400$
\end{minipage}
\begin{minipage}{0.32\textwidth}
\centering
\begin{tikzpicture}
\begin{axis}[compat = newest,xlabel={$s$},ylabel={$\theta$},enlarge x limits=false]
\addplot[black,smooth] table {perlalpAD8arg.dat};
\end{axis}
\end{tikzpicture}

(c) $t=800$
\end{minipage}
\caption{ 
The loop and anti-loop soliton solution: solid lines correspond to the numerical solutions of the average-difference method~\cref{eq_sg_dvdm} for the SG equation ($ \xi = 1.2, $ $ \Delta t = 0.1$, $ S = 80 $, and $ K = 129 $). }
\label{fig_per_lal_ADsge}
\end{minipage}
\end{figure}

\subsection{Numerical experiments for exact periodic solutions}
\label{subsec_periodic}

Finally, we confirm the validity of the proposed method using 
the exact periodic traveling wave solutions. 
Below we review several exact solutions under the periodic boundary condition devised by Parkes~\cite{P2008}
(see also Matsuno~\cite{M2008} for similar periodic solutions). 
All numerical experiments are done under the condition $ \Delta t = 0.1 $ and $ K = 65$. 

\subsubsection{Periodic hump solution}

One of the periodic traveling-wave solutions can be written as follows: 
\begin{equation}
\begin{cases}
{\displaystyle x(\tau,s) = v \tau + x_0 - s - \frac{1}{\alpha} f(\tau) + \frac{2}{\alpha} E (\alpha s + f(\tau) \mid \xi ),}\\[5pt]
{\displaystyle u(\tau,s) = \pm \frac{2 \sqrt{\xi} }{ \alpha } \mathrm{cn} ( \alpha s + f(\tau) \mid \xi ),}
\end{cases}
\label{solution_periodic_hump}
\end{equation}
where $ v > 0 $ represents the velocity of the traveling-wave, 
$ \xi \in (0, 1/2) $ denotes the parameter related to the shape of the wave, 
and $ \alpha := \sqrt{(1  - 2 \xi )/v} $. 
Here, $ \mathrm{cn} ( w | \xi ) $ is a Jacobian elliptic function, and $ E ( w | \xi ) $ is the elliptic integral of the second kind 
(with the notation as in~\cite{Handbook}). 
We also use the Jacobian elliptic functions $ \mathrm{sn} (w | \xi ) $ and $ \mathrm{dn} (w| \xi ) $, 
and $ K ( w | m ) $ denotes the elliptic integral of the first kind. 

The traveling-wave solution~\cref{solution_periodic_hump} satisfies the moving grid form~\cref{eq_spemg} of the SP equation 
for any smooth function $ f : \R_+ \to \R$. 
However, in order to transform it
to the solution of the SG equation via the hodograph transformation, 
the solution~\cref{solution_periodic_hump} must satisfy the condition~\cref{abp}. 
The condition holds if and only if the function $ f $ satisfies the differential equation 
\begin{equation*}
\frac{ \rd f }{ \rd \tau } = - \frac{1}{\alpha}. 
\end{equation*}
Hence, we define $ f ( \tau ) = - \tau /\alpha $. 
Thus, we obtain the concrete form of the traveling wave solution 
\begin{equation}
\begin{cases}
{\displaystyle x(\tau,s) = v \tau + x_0 - s +\frac{1}{\alpha^2} \tau + \frac{2}{\alpha} E \left(\alpha s- \frac{\tau}{\alpha}  \relmiddle{|} \xi \right),} \\[5pt]
{\displaystyle u(\tau,s) = \pm \frac{2 \sqrt{ \xi } }{ \alpha } \mathrm{cn} \left(\alpha s- \frac{\tau}{\alpha}  \relmiddle{|} \xi \right). }
\end{cases}
\label{eq_spe_perhump}
\end{equation}
The corresponding solution of the SG equation is given by 
\begin{equation}
\theta(\tau,s) = \arctan \frac{ \mp 2 \sqrt{ \xi } \, \mathrm{sn} \left(\alpha s- \frac{\tau}{\alpha}  \relmiddle{|} \xi \right) \mathrm{dn} \left(\alpha s- \frac{\tau}{\alpha}  \relmiddle{|} \xi \right)  }{-1+2 \left( \mathrm{dn} \left(\alpha s- \frac{\tau}{\alpha}  \relmiddle{|} \xi \right) \right)^2 }.
\label{eq_sge_perhump}
\end{equation}
We computed $\dthe{0}{}$ from this expression
to prepare accurate initial data to capture the exact solution.

In \cref{fig_per_ph_spe}, 
we show the numerical solution corresponding to the periodic hump solution~\cref{eq_spe_perhump} obtained by the proposed method.
The numerical solution of the SG equation
corresponding to~\cref{eq_sge_perhump} is shown in \cref{fig_per_ph_sge}. 
We see the numerical solution beautifully keeps the original shape
after time integration.

\begin{figure}
\begin{minipage}{\textwidth}
\pgfplotsset{width=4cm}
\centering
\begin{minipage}{0.3\textwidth}
\centering
\begin{tikzpicture}[remember picture]
\begin{axis}[compat = newest,xlabel={$x$},ylabel={$u$},enlarge x limits=false]
\addplot[black,smooth] table[x=x,y=y] {perhump0xyd.dat};
\addplot[mark = *,only marks,black] coordinates
{(14.0329,1.4132)};
\addplot[black,only marks,mark = |] table[x=x,y=c] {perhump0xyd.dat};
\end{axis}
\end{tikzpicture}
(a) $t=0$
\end{minipage}
\begin{minipage}{0.3\textwidth}
\centering
\begin{tikzpicture}[remember picture]
\begin{axis}[compat = newest,xlabel={$x$},ylabel={$u$},enlarge x limits=false]
\addplot[black,smooth] table[x=x,y=y] {perhump5xyd.dat};
\addplot[mark = *,only marks,black] coordinates
{(11.5266,1.3373)};
\addplot[black,only marks,mark = |] table[x=x,y=c] {perhump5xyd.dat};
\end{axis}
\end{tikzpicture}
(b) $t=5$
\end{minipage}
\begin{minipage}{0.3\textwidth}
\centering
\begin{tikzpicture}[remember picture]
\begin{axis}[compat = newest,xlabel={$x$},ylabel={$u$},enlarge x limits=false]
\addplot[black,smooth] table[x=x,y=y] {perhump10xyd.dat};
\addplot[mark = *,only marks,black] coordinates
{(9.0842,1.0951)};
\addplot[black,only marks,mark = |] table[x=x,y=c] {perhump10xyd.dat};
\end{axis}
\end{tikzpicture}
(c) $t=10$
\end{minipage}
\caption{ 
The periodic hump solution: solid lines correspond to the numerical solution of~\cref{eq_spe_perhump} with $ v = 1, x_0 = 0, \xi = 0.25 $. 
Bars are for sample points. The symbol `$\bullet$' denotes the base point $ \left( \xd{m}{0} ,\ud{m}{0} \right) $.}
\label{fig_per_ph_spe}
\end{minipage}

\begin{minipage}{\textwidth}
\pgfplotsset{width=4cm}
\centering
\begin{minipage}{0.3\textwidth}
\centering
\begin{tikzpicture}[remember picture]
\begin{axis}[compat = newest,xlabel={$s$},ylabel={$\theta$},enlarge x limits=false]
\addplot[black,smooth] table {perhump0arg.dat};
\end{axis}
\end{tikzpicture}
(a) $t=0$
\end{minipage}
\begin{minipage}{0.3\textwidth}
\centering
\begin{tikzpicture}[remember picture]
\begin{axis}[compat = newest,xlabel={$s$},ylabel={$\theta$},enlarge x limits=false]
\addplot[black,smooth] table {perhump5arg.dat};
\end{axis}
\end{tikzpicture}
(b) $ t = 5$
\end{minipage}
\begin{minipage}{0.3\textwidth}
\centering
\begin{tikzpicture}[remember picture]
\begin{axis}[compat = newest,xlabel={$s$},ylabel={$\theta$},enlarge x limits=false]
\addplot[black,smooth] table {perhump10arg.dat};
\end{axis}
\end{tikzpicture}
(c) $ t = 10$
\end{minipage}
\caption{ 
The periodic hump solution: solid lines correspond to the numerical solutions of~\cref{eq_sge_perhump} with $ v = 1, x_0 = 0, \xi = 0.25 $.}
\label{fig_per_ph_sge}
\end{minipage}
\end{figure}

\subsubsection{Periodic upright loop solution}

\begin{figure}[htbp]
\begin{minipage}{\textwidth}
\pgfplotsset{width=4cm}
\centering
\begin{minipage}{0.3\textwidth}
\centering
\begin{tikzpicture}[remember picture]
\begin{axis}[compat = newest,xlabel={$x$},ylabel={$u$},enlarge x limits=false]
\addplot[black,smooth] table[x=x,y=y] {perloop0xyd5.dat};
\addplot[black,smooth] table[x=x,y=y] {perloop0xyd1.dat};
\addplot[black,smooth] table[x=x,y=y] {perloop0xyd2.dat};
\addplot[black,smooth] table[x=x,y=y] {perloop0xyd3.dat};
\addplot[black,smooth] table[x=x,y=y] {perloop0xyd4.dat};
\addplot[mark = *,only marks,black] coordinates
{( 1.79,1.82)};
\addplot[black,only marks,mark = |] table[x=x,y=c] {perloop0xyd.dat};
\end{axis}
\end{tikzpicture}
(a) $t=0$
\end{minipage}
\begin{minipage}{0.3\textwidth}
\centering
\begin{tikzpicture}[remember picture]
\begin{axis}[compat = newest,xlabel={$x$},ylabel={$u$},enlarge x limits=false]
\addplot[black,smooth] table[x=x,y=y] {perloop5xyd1.dat};
\addplot[black,smooth] table[x=x,y=y] {perloop5xyd2.dat};
\addplot[black,smooth] table[x=x,y=y] {perloop5xyd3.dat};
\addplot[mark = *,only marks,black] coordinates
{(0.78,1.80)};
\addplot[black,only marks,mark = |] table[x=x,y=c] {perloop5xyd.dat};
\end{axis}
\end{tikzpicture}
(b) $t=5$
\end{minipage}
\begin{minipage}{0.3\textwidth}
\centering
\begin{tikzpicture}[remember picture]
\begin{axis}[compat = newest,xlabel={$x$},ylabel={$u$},enlarge x limits=false]
\addplot[black,smooth] table[x=x,y=y] {perloop10xyd1.dat};
\addplot[black,smooth] table[x=x,y=y] {perloop10xyd2.dat};
\addplot[black,smooth] table[x=x,y=y] {perloop10xyd3.dat};
\addplot[mark = *,only marks,black] coordinates
{(1.76,1.69)};
\addplot[black,only marks,mark = |] table[x=x,y=c] {perloop10xyd.dat};
\end{axis}
\end{tikzpicture}
(c) $t=10$
\end{minipage}
\caption{ 
The periodic loop solution: solid lines correspond to the numerical solution of~\cref{eq_spe_perloop} with $ v = 1, x_0 = 0, \xi = 0.75 $. 
Bars are for sample points. The symbol `$\bullet$' denotes the base point $ \left( \xd{m}{0} ,\ud{m}{0} \right) $.}
\label{fig_per_pl_spe}
\end{minipage}

\begin{minipage}{\textwidth}
\pgfplotsset{width=4cm}
\centering
\begin{minipage}{0.3\textwidth}
\centering
\begin{tikzpicture}[remember picture]
\begin{axis}[compat = newest,xlabel={$s$},ylabel={$\theta$},enlarge x limits=false]
\addplot[black,smooth] table {perloop0arg.dat};
\end{axis}
\end{tikzpicture}
(a) $t=0$
\end{minipage}
\begin{minipage}{0.3\textwidth}
\centering
\begin{tikzpicture}[remember picture]
\begin{axis}[compat = newest,xlabel={$s$},ylabel={$\theta$},enlarge x limits=false]
\addplot[black,smooth] table {perloop5arg.dat};
\end{axis}
\end{tikzpicture}
(b) $ t = 5$
\end{minipage}
\begin{minipage}{0.3\textwidth}
\centering
\begin{tikzpicture}[remember picture]
\begin{axis}[compat = newest,xlabel={$s$},ylabel={$\theta$},enlarge x limits=false]
\addplot[black,smooth] table {perloop10arg.dat};
\end{axis}
\end{tikzpicture}
(c) $ t = 10$
\end{minipage}
\caption{ 
The periodic loop solution: solid lines correspond to the numerical solutions of the SG equation corresponding to the solution~\cref{eq_spe_perloop} of the SP equation with $ v = 1, x_0 = 0, \xi = 0.75 $.}
\label{fig_per_pl_sge}
\end{minipage}
\end{figure}

Similarly to the above,
the periodic upright loop solutions can be written as
\begin{equation}
\begin{cases}
{\displaystyle x(\tau,s) = -v \tau + x_0 - \xi \alpha \left( \alpha s + f(\tau)  \right) + \frac{2}{\xi \alpha} E (\alpha s + f(\tau) \mid \xi ),}\\[5pt]
{\displaystyle u(\tau,s) = \pm \frac{2  }{ \xi \alpha } \mathrm{dn} ( \alpha s + f(\tau) \mid \xi ),}
\end{cases}
\label{solution_periodic_loop}
\end{equation}
where in this case $ \xi \in (0, 1) $
and $ \alpha := \sqrt{ (2- \xi )/ ( \xi^2 v)} $. 

Again the solution~\cref{solution_periodic_loop} must satisfy the condition~\cref{abp}, which implies
\begin{equation*}
\frac{ \rd f }{ \rd \tau } = - \frac{1}{\xi \alpha^2}. 
\end{equation*}
Hence, we define $ f (t) = - \tau / ( \xi \alpha^2 )$
to find
\begin{equation}
\begin{cases}
{\displaystyle x(\tau,s) = -v \tau + x_0 - \xi \alpha^2  s + \frac{1}{\alpha} \tau  + \frac{2}{ \xi \alpha} E \left(\alpha s - \frac{ \tau }{ \xi \alpha^2} \relmiddle{|} \xi \right),}\\[5pt]
{\displaystyle u(\tau,s) = \pm \frac{2  }{ \xi \alpha } \mathrm{dn} \left( \alpha s - \frac{ \tau }{ \xi \alpha^2} \relmiddle{|} \xi  \right).}\end{cases}
\label{eq_spe_perloop}
\end{equation}
We prepared the initial data from this expression.
The results well captures the exact solution (\cref{fig_per_pl_spe} and \cref{fig_per_pl_sge}).

\subsubsection{Periodic solution comprising alternating upright and inverted bells}

\begin{figure}
\begin{minipage}{\textwidth}
\pgfplotsset{width=4cm}
\centering
\begin{minipage}{0.3\textwidth}
\centering
\begin{tikzpicture}[remember picture]
\begin{axis}[compat = newest,xlabel={$x$},ylabel={$u$},enlarge x limits=false]
\addplot[black,smooth] table[x=x,y=y] {peralt0xyd1.dat};
\addplot[black,smooth] table[x=x,y=y] {peralt0xyd2.dat};
\addplot[black,smooth] table[x=x,y=y] {peralt0xyd3.dat};
\addplot[mark = *,only marks,black] coordinates
{( 2.70,2.45)};
\addplot[black,only marks,mark = |] table[x=x,y=c] {peralt0xyd.dat};
\end{axis}
\end{tikzpicture}
(a) $t=0$
\end{minipage}
\begin{minipage}{0.3\textwidth}
\centering
\begin{tikzpicture}[remember picture]
\begin{axis}[compat = newest,xlabel={$x$},ylabel={$u$},enlarge x limits=false]
\addplot[black,smooth] table[x=x,y=y] {peralt5xyd1.dat};
\addplot[black,smooth] table[x=x,y=y] {peralt5xyd2.dat};
\addplot[black,smooth] table[x=x,y=y] {peralt5xyd3.dat};
\addplot[black,smooth] table[x=x,y=y] {peralt5xyd4.dat};
\addplot[black,smooth] table[x=x,y=y] {peralt5xyd5.dat};
\addplot[mark = *,only marks,black] coordinates
{(0.05,0.75)};
\addplot[black,only marks,mark = |] table[x=x,y=c] {peralt5xyd.dat};
\end{axis}
\end{tikzpicture}
(b) $t=5$
\end{minipage}
\begin{minipage}{0.3\textwidth}
\centering
\begin{tikzpicture}[remember picture]
\begin{axis}[compat = newest,xlabel={$x$},ylabel={$u$},enlarge x limits=false]
\addplot[black,smooth] table[x=x,y=y] {peralt10xyd1.dat};
\addplot[black,smooth] table[x=x,y=y] {peralt10xyd2.dat};
\addplot[black,smooth] table[x=x,y=y] {peralt10xyd3.dat};
\addplot[mark = *,only marks,black] coordinates
{(1.71,-1.17)};
\addplot[black,only marks,mark = |] table[x=x,y=c] {peralt10xyd.dat};
\end{axis}
\end{tikzpicture}
(c) $t=10$
\end{minipage}
\caption{ 
The periodic solution comprising alternating upright and inverted bells: solid lines correspond to the numerical solution of~\cref{eq_spe_peralt} with $ v = 1, x_0 = 0, \xi = 0.75 $. 
Bars are for sample points. The symbol `$\bullet$' denotes the base point $ \left( \xd{m}{0} ,\ud{m}{0} \right) $.}
\label{fig_per_pa_spe}
\end{minipage}

\begin{minipage}{\textwidth}
\pgfplotsset{width=4cm}
\centering
\begin{minipage}{0.3\textwidth}
\centering
\begin{tikzpicture}[remember picture]
\begin{axis}[compat = newest,xlabel={$s$},ylabel={$\theta$},enlarge x limits=false]
\addplot[black,smooth] table {peralt0arg.dat};
\end{axis}
\end{tikzpicture}
(a) $t=0$
\end{minipage}
\begin{minipage}{0.3\textwidth}
\centering
\begin{tikzpicture}[remember picture]
\begin{axis}[compat = newest,xlabel={$s$},ylabel={$\theta$},enlarge x limits=false]
\addplot[black,smooth] table {peralt5arg.dat};
\end{axis}
\end{tikzpicture}
(b) $ t = 5$
\end{minipage}
\begin{minipage}{0.3\textwidth}
\centering
\begin{tikzpicture}[remember picture]
\begin{axis}[compat = newest,xlabel={$s$},ylabel={$\theta$},enlarge x limits=false]
\addplot[black,smooth] table {peralt10arg.dat};
\end{axis}
\end{tikzpicture}
(c) $ t = 10$
\end{minipage}
\caption{ 
The periodic solution comprising alternating upright and inverted bells: solid lines correspond to the numerical solutions of the SG equation corresponding to the solution~\cref{eq_spe_peralt} of the SP equation with $ v = 1, x_0 = 0, \xi = 0.75 $.}
\label{fig_per_pa_sge}
\end{minipage}
\end{figure}

The periodic solutions comprising alternating upright and inverted bells can be written as
\begin{equation}
\begin{cases}
{\displaystyle x(\tau,s) = -v \tau + x_0 - s - \frac{1}{\alpha} f(\tau) + \frac{2}{\alpha} E \left( \alpha s + f(\tau) \relmiddle{|} \xi \right), } \\[5pt]
{\displaystyle u(\tau,s) = \pm \frac{2 \sqrt{ \xi } }{ \alpha } \mathrm{cn} \left( \alpha s + f(\tau) \relmiddle{|} \xi \right),}
\end{cases}
\label{solution_periodic_alt}
\end{equation}
where $ \xi \in (1/2, 1) $ 
and $ \alpha := \sqrt{ (2 \xi -1)/v} $. 

Similarly to the above, from the constraint
\begin{equation*}
\frac{ \rd f }{ \rd \tau } = - \frac{1}{\alpha},
\end{equation*}
we define $ f (t) = - \tau / \alpha $, and  obtain
\begin{equation}
\begin{cases}
{\displaystyle x(\tau,s) = v \tau + x_0 - s +\frac{1}{\alpha^2} \tau + \frac{2}{\alpha} E \left(\alpha s- \frac{\tau}{\alpha}  \relmiddle{|} \xi \right),} \\[5pt]
{\displaystyle u(\tau,s) = \pm \frac{2 \sqrt{ \xi } }{ \alpha } \mathrm{cn} \left(\alpha s- \frac{\tau}{\alpha}  \relmiddle{|} \xi \right). }
\end{cases}
\label{eq_spe_peralt}
\end{equation}
The initial data was prepared from this expression.
The results are shown in \cref{fig_per_pa_spe} and \cref{fig_per_pa_sge},
which are again quite well.

\section{Concluding Remarks}
\label{sec_conclusion}

In this paper, 
we established a new self-adaptive moving mesh method
for the short pulse equation, employing
several ideas from integrable systems,
moving mesh methods, and structure-preserving methods;
none of which is truly dispensable.
We hope that, in the reverse direction, this work
will invoke new discussions in each research field.

Some possible future works are in order.
Feng--Maruno--Ohta~\cite{FMO2014} showed that 
the short pulse equation is also associated with the coupled dispersionless (CD) equation~\cite{KO1994_1,HT1994}, 
and this relation can be used for the construction of another self-adaptive moving mesh method. 
The present authors have already confirmed that 
a similar self-adaptive numerical scheme can be constructed based on this formulation~\cite{SMF2016}. 
Furthermore, we can extend the approach to wide range of similar PDEs
including the complex SP equation, whose exact solutions were recently found by Ling--Feng--Zhu~\cite{LFZ2016}. 
Another interesting topic is, as stated in Introduction,
the challenge of extending standard moving mesh methods
to the PDEs including the term $ u_{tx} $.




\begin{thebibliography}{10}

\bibitem{Handbook}
{\sc M.~Abramowitz and I.~A. Stegun}, {\em Handbook of mathematical functions
  with formulas, graphs, and mathematical tables}, vol.~55 of National Bureau
  of Standards Applied Mathematics Series, U.S. Government Printing Office,
  Washington, D.C., 1964.


\bibitem{BR2001}
{\sc T.~J. Bridges and S.~Reich}, {\em Multi-symplectic integrators: numerical
  schemes for {H}amiltonian {PDE}s that conserve symplecticity}, Phys. Lett. A,
  284 (2001), pp.~184--193,
  \url{10.1016/S0375-9601(01)00294-8}.

\bibitem{B2005}
{\sc J.~C. Brunelli}, {\em The short pulse hierarchy}, J. Math. Phys., 46
  (2005), pp.~123507, 9, 
  \url{10.1063/1.2146189}.

\bibitem{B2006}
{\sc J.~C. Brunelli}, {\em The bi-{H}amiltonian structure of the short pulse
  equation}, Phys. Lett. A, 353 (2006), pp.~475--478,
  \url{10.1016/j.physleta.2006.01.009}.

\bibitem{BHR2009}
{\sc C.~J. Budd, W.~Huang, and R.~D. Russell}, {\em Adaptivity with moving
  grids}, Acta Numer., 18 (2009), pp.~111--241,
  \url{10.1017/S0962492906400015}.

\bibitem{CJSW2005}
{\sc Y.~Chung, C.~K. R.~T. Jones, T.~Sch{\"a}fer, and C.~E. Wayne}, {\em
  Ultra-short pulses in linear and nonlinear media}, Nonlinearity, 18 (2005),
  pp.~1351--1374,
  \url{10.1088/0951-7715/18/3/021}.

\bibitem{CR2015}
{\sc G.~M. Coclite and L.~di~Ruvo}, {\em Well-posedness results for the short
  pulse equation}, Z. Angew. Math. Phys., 66 (2015), pp.~1529--1557,
  \url{10.1007/s00033-014-0478-6}.

\bibitem{CMJ2009}
{\sc N.~Costanzino, V.~Manukian, and C.~K. R.~T. Jones}, {\em Solitary waves of
  the regularized short pulse and {O}strovsky equations}, SIAM J. Math. Anal.,
  41 (2009), pp.~2088--2106,
  \url{10.1137/080734327}.

\bibitem{F2012}
{\sc B.-F. Feng}, {\em An integrable coupled short pulse equation}, J. Phys. A,
  45 (2012), pp.~085202, 14,
  \url{10.1088/1751-8113/45/8/085202}.

\bibitem{F2015}
{\sc B.-F. Feng}, {\em Complex short pulse and coupled complex short pulse
  equations}, Phys. D, 297 (2015), pp.~62--75,
  \url{10.1016/j.physd.2014.12.002}.

\bibitem{FCCMO2015}
{\sc B.-F. Feng, J.~Chen, Y.~Chen, K.-i. Maruno, and Y.~Ohta}, {\em Integrable
  discretizations and self-adaptive moving mesh method for a coupled short
  pulse equation}, J. Phys. A, 48 (2015), pp.~385202, 21,
  \url{10.1088/1751-8113/48/38/385202}.

\bibitem{FIKMO2011}
{\sc B.-F. Feng, J.~Inoguchi, K.~Kajiwara, K.~Maruno, and Y.~Ohta}, {\em
  Discrete integrable systems and hodograph transformations arising from
  motions of discrete plane curves}, J. Phys. A, 44 (2011), p.~395201 (19pp),
  \url{10.1088/1751-8113/44/39/395201}.

\bibitem{FMO2010_1}
{\sc B.-F. Feng, K.~Maruno, and Y.~Ohta}, {\em Integrable discretizations for
  the short-wave model of the {C}amassa-{H}olm equation}, J. Phys. A, 43
  (2010), p.~265202 (14pp),
  \url{10.1088/1751-8113/43/26/265202}.

\bibitem{FMO2010_2}
{\sc B.-F. Feng, K.~Maruno, and Y.~Ohta}, {\em Integrable discretizations of
  the short pulse equation}, J. Phys. A, 43 (2010), p.~085203 (14pp),
  \url{10.1088/1751-8113/43/8/085203}.

\bibitem{FMO2010_3}
{\sc B.-F. Feng, K.~Maruno, and Y.~Ohta}, {\em A self-adaptive moving mesh
  method for the {C}amassa-{H}olm equation}, J. Comput. Appl. Math., 235
  (2010), pp.~229--243, 
  \url{10.1016/j.cam.2010.05.044}.

\bibitem{FMO2014}
{\sc B.-F. Feng, K.~Maruno, and Y.~Ohta}, {\em Self-adaptive moving mesh
  schemes for short pulse type equations and their {L}ax pairs}, Pac. J. Math.
  Ind., 6 (2014), pp.~Art. 8, 14,
  \url{10.1186/s40736-014-0008-7}.


\bibitem{Fornberg1996}
{\sc B.~Fornberg}, {\em A practical guide to pseudospectral methods}, vol.~1 of
  Cambridge Monographs on Applied and Computational Mathematics, Cambridge
  University Press, Cambridge, 1996,
  \url{10.1017/CBO9780511626357}.

\bibitem{FCZML2010}
{\sc Z.~Fu, Z.~Chen, L.~Zhang, J.~Mao, and S.~Liu}, {\em Novel exact solutions
  to the short pulse equation}, Appl. Math. Comput., 215 (2010),
  pp.~3899--3905,
  \url{10.1016/j.amc.2009.11.038}.

\bibitem{F1999}
{\sc D.~Furihata}, {\em Finite difference schemes for {$\partial u/\partial
  t=(\partial/\partial x)^\alpha\delta G/\delta u$} that inherit energy
  conservation or dissipation property}, J. Comput. Phys., 156 (1999),
  pp.~181--205,
  \url{10.1006/jcph.1999.6377}.

\bibitem{FM2011}
{\sc D.~Furihata and T.~Matsuo}, {\em Discrete variational derivative method--A
  structure-preserving numerical method for partial differential equations},
  CRC Press, Boca Raton, 2011.


\bibitem{FSM2016+}
{\sc D.~Furihata, S.~Sato, and T.~Matsuo}, {\em A novel discrete variational
  derivative method using ``average-difference methods''}, may 2016,
  \url{http://arxiv.org/abs/1605.04793}.

\bibitem{GTGCC2015}
{\sc G.~Gambino, U.~Tanriver, P.~Guha, A.~G. Choudhury, and S.~R. Choudhury},
  {\em Regular and singular pulse and front solutions and possible isochronous
  behavior in the short-pulse equation: phase-plane, multi-infinite series and
  variational approaches}, Commun. Nonlinear Sci. Numer. Simul., 20 (2015),
  pp.~375--388,
  \url{10.1016/j.cnsns.2014.06.011}.



\bibitem{HT1994}
{\sc R.~Hirota and S.~Tsujimoto}, {\em Note on ``{N}ew coupled integrable
  dispersionless equations''.}, J. Phys. Soc. Jpn., 63 (1994), pp.~3533--3534,
  \url{10.1143/JPSJ.63.3533}.

\bibitem{H2009}
{\sc T.~P. Horikis}, {\em The short-pulse equation and associated constraints},
  J. Phys. A, 42 (2009), pp.~442004, 5,
  \url{10.1088/1751-8113/42/44/442004}.

\bibitem{HR2011}
{\sc W.~Huang and R.~D. Russell}, {\em Adaptive moving mesh methods}, vol.~174
  of Applied Mathematical Sciences, Springer, New York, 2011,
  \url{10.1007/978-1-4419-7916-2}.

\bibitem{H1990}
{\sc J.~K. Hunter}, {\em Numerical solutions of some nonlinear dispersive wave
  equations}, in Computational solution of nonlinear systems of equations
  ({F}ort {C}ollins, {CO}, 1988), vol.~26 of Lectures in Appl. Math., Amer.
  Math. Soc., Providence, RI, 1990, pp.~301--316.

\bibitem{KMY2012}
{\sc H.~Kanazawa, T.~Matsuo, and T.~Yaguchi}, {\em A conservative compact
  finite difference scheme for the {K}d{V} equation}, JSIAM Lett., 4 (2012),
  pp.~5--8,
  \url{10.14495/jsiaml.4.5}.

\bibitem{KO1994_1}
{\sc K.~Konno and H.~Oono}, {\em New coupled integrable dispersionless
  equations}, J. Phys. Soc. Jpn., 63 (1994), pp.~377--378,
  \url{10.1143/JPSJ.63.377}.


\bibitem{KCS2013}
{\sc L.~Kurt, Y.~Chung, and T.~Sch{\"a}fer}, {\em Higher-order corrections to
  the short-pulse equation}, J. Phys. A, 46 (2013), pp.~285205, 13,
  \url{10.1088/1751-8113/46/28/285205}.

\bibitem{KS2014}
{\sc L.~Kurt and T.~Sch{\"a}fer}, {\em Propagation of ultra-short solitons in
  stochastic {M}axwell's equations}, J. Math. Phys., 55 (2014), pp.~011503, 11,
  \url{10.1063/1.4859815}.


\bibitem{LFZ2016}
{\sc L.~Ling, B.-F. Feng, and Z.~Zhu}, {\em Multi-soliton, multi-breather and
  higher order rogue wave solutions to the complex short pulse equation}, Phys.
  D, 327 (2016), pp.~13--29,
  \url{10.1016/j.physd.2016.03.012}.

\bibitem{LL2009}
{\sc H.~Liu and J.~Li}, {\em Lie symmetry analysis and exact solutions for the
  short pulse equation}, Nonlinear Anal., 71 (2009), pp.~2126--2133,
  \url{10.1016/j.na.2009.01.075}.

\bibitem{LPS2009}
{\sc Y.~Liu, D.~Pelinovsky, and A.~Sakovich}, {\em Wave breaking in the
  short-pulse equation}, Dyn. Partial Differ. Equ., 6 (2009), pp.~291--310,
  \url{10.4310/DPDE.2009.v6.n4.a1}.

\bibitem{M2007}
{\sc Y.~Matsuno}, {\em Multiloop soliton and multibreather solutions of the
  short pulse model equation}, J. Phys. Soc. Japan, 76 (2007), p.~084003,
  \url{10.1143/JPSJ.76.084003}.

\bibitem{M2008}
{\sc Y.~Matsuno}, {\em Periodic solutions of the short pulse model equation},
  J. Math. Phys., 49 (2008), pp.~073508, 18,
  \url{10.1063/1.2951891}.

\bibitem{MYM2012}
{\sc Y.~Miyatake, T.~Yaguchi, and T.~Matsuo}, {\em Numerical integration of the
  {O}strovsky equation based on its geometric structures}, J. Comput. Phys.,
  231 (2012), pp.~4542--4559,
  \url{10.1016/j.jcp.2012.02.027}.

\bibitem{MR2003}
{\sc B.~Moore and S.~Reich}, {\em Backward error analysis for multi-symplectic
  integration methods}, Numer. Math., 95 (2003), pp.~625--652,
  \url{10.1007/s00211-003-0458-9}.

\bibitem{OgumaMT}
{\sc K.~Oguma}, {\em Construction of moving mesh finite difference schemes for
  {PDE}s via hodograph transformation (in {J}apanese), {T}he {U}niversity of
  {T}okyo, {M}aster's thesis}, 2014.

\bibitem{OMF2008}
{\sc Y.~Ohta, K.~Maruno, and B.-F. Feng}, {\em An integrable
  semi-discretization of the {C}amassa-{H}olm equation and its determinant
  solution}, J. Phys. A, 41 (2008), p.~355205 (30pp),
  \url{10.1088/1751-8113/41/35/355205}.

\bibitem{O1978}
{\sc L.~A. Ostrovsky}, {\em Nonlinear internal waves in a rotating ocean},
  Okeanologia, 18 (1978), pp.~181--191.

\bibitem{P2008}
{\sc E.~J. Parkes}, {\em Some periodic and solitary travelling-wave solutions
  of the short-pulse equation}, Chaos Solitons Fractals, 38 (2008),
  pp.~154--159,
  \url{10.1016/j.chaos.2006.10.055}.

\bibitem{PA2009}
{\sc E.~J. Parkes and S.~Abbasbandy}, {\em Finding the one-loop soliton
  solution of the short-pulse equation by means of the homotopy analysis
  method}, Numer. Methods Partial Differential Equations, 25 (2009),
  pp.~401--408,
  \url{10.1002/num.20348}.

\bibitem{PS2010}
{\sc D.~Pelinovsky and A.~Sakovich}, {\em Global well-posedness of the
  short-pulse and sine-{G}ordon equations in energy space}, Comm. Partial
  Differential Equations, 352 (2010), pp.~613--629,
  \url{10.1080/03605300903509104}.

\bibitem{PK2015}
{\sc M.~Pietrzyk and I.~Kanatt{\v{s}}ikov}, {\em On the polysymplectic
  integrator for the short pulse equation}, dec 2015,
  \href{http://arxiv.org/abs/1512.09105}{arXiv:1512.09105 [math-ph]}.

\bibitem{PKB2008}
{\sc M.~Pietrzyk, I.~Kanatt{\v{s}}ikov, and U.~Bandelow}, {\em On the
  propagation of vector ultra-short pulses}, J. Nonlinear Math. Phys., 15
  (2008), pp.~162--170,
  \url{10.2991/jnmp.2008.15.2.4}.

\bibitem{SakovichMT}
{\sc A.~Sakovich}, {\em Well-posedness of nonlinear wave equations in
  characteristic coordinates, {M}c{M}aster {U}niversity, {M}aster's thesis},
  2009.

\bibitem{SS2005}
{\sc A.~Sakovich and S.~Sakovich}, {\em The short pulse equation is
  integrable}, J. Phys. Soc. Japan, 74 (2005), pp.~239--241,
  \url{10.1143/JPSJ.74.239}.

\bibitem{SS2006}
{\sc A.~Sakovich and S.~Sakovich}, {\em Solitary wave solutions of the short
  pulse equation}, J. Phys. A, 39 (2006), pp.~L361--L367,
  \url{10.1088/0305-4470/39/22/L03}.

\bibitem{SMF2016}
{\sc S.~Sato, T.~Matsuo, and B.-F. Feng}, {\em A norm-preserving self-adaptive
  moving mesh integrator for the short pulse equation}, RIMS Kokyuroku, 1995.

\bibitem{SW2004}
{\sc T.~Sch{\"a}fer and C.~E. Wayne}, {\em Propagation of ultra-short optical
  pulses in cubic nonlinear media}, Phys. D, 196 (2004), pp.~90--105,
  \url{10.1016/j.physd.2004.04.007}.

\bibitem{SWWKSF2010}
{\sc Y.~Shen, F.~Williams, N.~Whitaker, P.~Kevrekidis, A.~Saxena, and
  D.~Frantzeskakis}, {\em On some single-hump solutions of the short-pulse
  equation and their periodic generalizations}, Phys. Lett. A, 374 (2010),
  pp.~2964--2967,
  \url{10.1016/j.physleta.2010.05.014}.

\bibitem{WKI1979}
{\sc M.~Wadati, K.~Konnno, and Y.~Ichikawa}, {\em New integrable nonlinear
  evolution equations}, J. Phys. soc. Japan, 47 (1979), pp.~1698--1700,
  \url{10.1143/JPSJ.47.1698}.

\bibitem{YMS2010}
{\sc T.~Yaguchi, T.~Matsuo, and M.~Sugihara}, {\em Conservative numerical
  schemes for the {O}strovsky equation}, J. Comput. Appl. Math., 234 (2010),
  pp.~1036--1048,
  \url{10.1016/j.cam.2009.03.008}.


\end{thebibliography}


\end{document}